\newcommand\R{{\mathbb{R}}}
\newcommand\C{{\mathbb{C}}}
\renewcommand\P{{\mathbf{P}}}
\newcommand\E{{\mathbf{E}}}
\renewcommand\Im{{\operatorname{Im}}}
\renewcommand\Re{{\operatorname{Re}}}
\renewcommand\th{{\operatorname{th}}}
\newcommand\eps{{\varepsilon}}
\newcommand\tr{\operatorname{trace}}
\newcommand\condo{{{\bf C0}}}
\newcommand\condone{{{\bf C1}}}
\theoremstyle{plain}
  \newtheorem{theorem}{Theorem}
  \newtheorem{proposition}[theorem]{Proposition}
  \newtheorem{lemma}[theorem]{Lemma}
  \newtheorem{corollary}[theorem]{Corollary}
\theoremstyle{definition}
  \newtheorem{definition}[theorem]{Definition}
  \newtheorem{remark}[theorem]{Remark}
\begin{document}

\title{Random matrices: Universal properties  of eigenvectors}

\author{Terence Tao}
\address{Department of Mathematics, UCLA, Los Angeles CA 90095-1555}
\email{tao@math.ucla.edu}
\thanks{T. Tao is supported by NSF Research Award DMS-0649473, the NSF Waterman award and a grant from the MacArthur Foundation.}

\author{Van Vu}
\address{Department of Mathematics, Rutgers, New Jersey, NJ 08854}
\email{vanvu@math.rutgers.edu}
\thanks{V. Vu is supported by research grants DMS-0901216 and AFOSAR-FA-9550-09-1-0167.}

\begin{abstract}   The four moment theorem asserts, roughly speaking, that the joint distribution of a small number of eigenvalues of a Wigner random matrix (when measured at the scale of the mean eigenvalue spacing) depends only on the first four moments of the entries of the matrix.  In this paper, we extend the four moment theorem to also cover the coefficients of the \emph{eigenvectors} of a Wigner random matrix.  A similar result (with different hypotheses) has been proved recently by Knowles and Yin, using a different method. 

 As an application, we prove some central limit theorems for these eigenvectors. In another application, we prove  a universality result for the resolvent, up to the real axis. This implies universality of the inverse matrix. 
\end{abstract}

\maketitle

\section{Introduction}

Consider a random Hermitian $n \times n$ matrix $M_n$ with $n$ real eigenvalues (counting multiplicity)
$$ \lambda_1(M_n) \leq \ldots \leq \lambda_n(M_n).$$ 
By the spectral theorem, one can find an orthonormal basis 
$$ u_1(M_n),\ldots,u_n(M_n) \in S^{2n-1} \subset \C^n$$
of unit eigenvectors of this matrix:
$$ M_n u_i(M_n) = \lambda_i(M_n) u_i(M_n),$$
where $S^{2n-1} := \{ z \in \C^n: |z| = 1 \}$ is the unit sphere of $\C^n$, and we view elements of $\C^n$ as column vectors.  We write $u_{i,p}(M_n) = \langle u_i(M_n), e_p \rangle$ for the $p^{\th}$ coefficient of $u_i(M_n)$ for each $1 \leq i,p \leq n$.  If $M_n$ is not  Hermitian, but is in fact real symmetric, then we can require the $u_i(M_n)$ to have real coefficients, thus taking values in the unit sphere $S^{n-1} := \{ x \in \R^n: |x|=1\}$ of $\R^n$.

Unfortunately, the eigenvectors $u_i(M_n)$ are not unique in either the Hermitian or real symmetric cases; even if one assumes that the spectrum of $M_n$ is \emph{simple}, in the sense that
$$ \lambda_1(M_n) <  \ldots < \lambda_n(M_n),$$ 
one has the freedom to rotate each $u_i(M_n)$ by a unit\footnote{We use $\sqrt{-1}$ to denote the imaginary unit, in order to free up the symbol $i$ as an index variable.} phase $e^{\sqrt{-1}\theta} \in U(1)$.  In the real symmetric case, in which we force the eigenvectors to have real coefficients, one only has the freedom to multiply each $u_i(M_n)$ by a sign $\pm \in O(1)$.  However, one can eliminate this $U(1)$ phase ambiguity or $O(1)$ sign ambiguity (in the case of simple spectrum) by a adopting a variety of viewpoints:
\begin{enumerate}
\item[(i)] One can consider the rank one projection operators
$$ P_i(M_n) := u_i(M_n) u_i(M_n)^*$$
instead of the eigenvectors $u_i(M_n)$, thus the $pq$ coefficient $P_{i,p,q}(M_n)$ of $P_i(M_n)$ is given by the formula
$$ P_{i,p,q}(M_n) = u_{i,p}(M_n) \overline{u_{i,q}(M_n)}.$$
\item[(ii)]  One can consider the equivalence class $[u_i(M_n)] := \{ e^{\sqrt{-1}\theta} u_i(M_n): \theta \in \R \} \in S^{2n-1}/U(1)$ (or $\{ \pm u_i(M_n) \} \in S^{n-1}/O(1)$, in the real symmetric case) of all eigenvectors with eigenvalue $\lambda_i$, instead of considering any one individual eigenvector.
\item[(iii)]  One can perform the \emph{ad hoc} normalization of requiring $u_{i,p}(M_n)$ to be positive real, where $p$ is the first index for which $u_{i,p}(M_n) \neq 0$ (generically we will have $p=1$, and as we will see shortly, for Wigner matrices we will also have $p=1$ with high probability).
\item[(iv)] One can perform the random normalization of replacing $u_i(M_n)$ with a randomly chosen rotation $e^{\sqrt{-1}\theta} u_i(M_n)$ (in the Hermitian case) or $\pm u_i(M_n)$ (in the real symmetric case) (each the random phase or sign being chosen independently of each other, and (if $M_n$ is itself random) of $M_n$).
\end{enumerate}
Note that $P_i$, $[u_i(M_n)]$, the \emph{ad hoc} normalized $u_i(M_n)$, and the randomly normalized $u_i(M_n)$ in viewpoints (i)-(iv) respectively will be uniquely defined as long as the spectrum is simple (indeed, it suffices to have $\lambda_{i-1}(M_n) < \lambda_i(M_n) < \lambda_{i+1}(M_n)$).  In the \emph{proofs} of our main results, we shall adopt viewpoint (i) (which is natural from the perspective of spectral theory).  However, in order to express our results in explicit coordinates, we will adopt the more \emph{ad hoc} viewpoint (iii) or the random viewpoint (iv) in the \emph{statements} of our results.

{\it Notations.}  We consider $n$ as an asymptotic parameter tending
to infinity.  We use $X \ll Y$, $Y \gg X$, $Y = \Omega(X)$, or $X =
O(Y)$ to denote the bound $X \leq CY$ for all sufficiently large $n$
and for some constant  $C$. Notations such as
 $X \ll_k Y, X= O_k(Y)$ mean that the hidden constant $C$ depend on
 another constant $k$. $X=o(Y)$ or $Y= \omega(X)$ means that
 $X/Y \rightarrow 0$ as $n \rightarrow \infty$; the rate of decay here will be allowed to depend on other parameters.

We will need some definitions that capture the intuition that a certain event $E$ occurs very frequently.

\begin{definition}[Frequent events]\label{freq-def}  Let $E$ be an event depending on $n$.
\begin{itemize}
\item $E$ holds \emph{asymptotically almost surely} if $\P(E) = 1-o(1)$.
\item $E$ holds \emph{with high probability} if $\P(E) \geq 1-O(n^{-c})$ for some constant $c>0$.
\item $E$ holds \emph{with overwhelming probability} if $\P(E) \geq 1-O_C(n^{-C})$ for \emph{every} constant $C>0$ (or equivalently, that $\P(E) \geq 1 - \exp(-\omega(\log n))$).
\item $E$ holds \emph{almost surely} if $\P(E)=1$.  
\end{itemize}
\end{definition}

The goal of this paper is to understand the distribution of the eigenvectors $u_i(M_n)$ (as normalized using viewpoint (iii) or (iv), for sake of concreteness) of a class of random matrix ensembles known as \emph{Wigner random matrices}.  

Let us first identify the class of matrices we are working with. 

\begin{definition}[Wigner matrices]\label{def:Wignermatrix}  Let $n \geq 1$ be an integer (which we view as a parameter going off to infinity). An $n \times n$ \emph{Wigner Hermitian matrix} $M_n$ is defined to be a  random Hermitian $n \times n$ matrix $M_n = (\xi_{ij})_{1 \leq i,j \leq n}$, in which the $\xi_{ij}$ for $1 \leq i \leq j \leq n$ are jointly independent with $\xi_{ji} = \overline{\xi_{ij}}$.  For $1 \leq i < j \leq n$, we require that the $\xi_{ij}$ have mean zero and variance one, while for $1 \leq i=j \leq n$ we require that the $\xi_{ij}$ (which are necessarily real) have mean zero and variance $\sigma^2$ for some $\sigma^2>0$ independent of $i,j,n$. (Note that we do not require the $\xi_{ij}$ to be identically distributed, either on or off the diagonal.)

We say that the Wigner matrix ensemble \emph{obeys condition {\condo}} if we have the exponential decay condition
\begin{equation*}
\P(|\xi_{ij}|\ge t^C) \le e^{-t} 
\end{equation*}
for all $1 \leq i,j \leq n$ and $t \ge C'$, and some constants $C, C'$ (independent of $i,j,n$).  We say that the Wigner matrix ensemble \emph{obeys condition {\condone} with constant $C_0$} if one has
$$ \E |\xi_{ij}|^{C_0} \leq C$$
for some constant $C$ (independent of $n$).
\end{definition}

Of course, Condition {\condo} implies Condition {\condone} for any $C_0$, but not conversely.

An important example of a Wigner matrix ensemble is the \emph{Gaussian unitary ensemble} (GUE), which in our notation corresponds to the case when $\xi_{ij}$ has the complex normal distribution $N(0,1)_\C$ with mean zero and variance one for $i>j$, and $\xi_{ij}$ has the real normal distribution $N(0,1)_\R$ with mean zero and variance one for $i=j$.  This ensemble obeys Condition {\condo}, and hence also Condition {\condone} for any $C_0$.  Another important example is the \emph{Gaussian orthogonal ensemble} (GOE), in which $\xi_{ij}$ has the real normal distribution $N(0,1)_\R$ for $i>j$ and the real normal distribution $N(0,2)_\R$ for $i=j$.

The GOE and GUE are continuous ensembles, and it is therefore not difficult to show that the spectrum of such matrices are almost surely simple.
The GUE ensemble is invariant under conjugation by unitary matrices, which implies that each unit eigenvector $u_i(M_n)$ is uniformly distributed on the unit sphere $S^{2n-1}$ of $\C^n$, after quotienting out by the $U(1)$ action $u_i(M_n) \to e^{\sqrt{-1} \theta} u_i(M_n)$ (or equivalently, that $[u_i(M_n)]$ is uniformly distributed using the Haar measure on $S^{2n-1}/U(1)$).  In particular, $P_i(M_n)$ is uniformly distributed on the space of rank one projections in $\C^n$.  Similarly, for GOE, the unit eigenvector $u_i(M_n)$ is uniformly distributed on the unit sphere of $\R^n$ (after quotienting out by the $O(1)$ action $u_i(M_n) \to \pm u_i(M_n)$).  The same argument in fact shows that for GUE, the unitary matrix $(u_1(M_n),\ldots,u_n(M_n))$ is distributed according to Haar measure on the unitary group $U(n)$ (after quotienting out by the left-action of the diagonal unitary matrices $U(1)^n$), and for GOE, the orthogonal matrix $(u_1(M_n),\ldots,u_n(M_n))$ is distributed according to Haar measure on the orthogonal group $O(n)$ (after quotienting out by the left-action of the diagonal sign matrices $O(1)^n$).  In particular, if one uses the random normalization (iv), then $(u_1(M_n),\ldots,u_n(M_n))$ will be distributed according to Haar measure on $U(n)$ (in the GUE case) or $O(n)$ (in the GOE case).

The distribution of coefficients of a matrix distributed using the Haar measure on the unitary or orthogonal groups has been studied by many authors \cite{Borel}, \cite{Stam}, \cite{Gallardo}, \cite{Yor}, \cite{Diaconis}, \cite{DEL}, \cite{Collins}, \cite{Jiang-2006}, \cite{Jiang}.  It is known (see \cite{Borel}) that each coefficient, after multiplication by $\sqrt{n}$, is asymptotically complex normal (in the unitary case) or real normal (in the orthogonal case).  In fact the same is true for the joint distribution of multiple coefficients:

\begin{theorem}\label{sli}\cite{Jiang-2006}, \cite{Jiang}  Let $(u_1,\ldots,u_n)$ be distributed using Haar measure on $O(n)$.  For $1 \leq i,p \leq n$, let $u_{i,p}$ be the $p^{\th}$ coefficient of $u_{i,p}$.  Let $\xi_{i,p} \equiv N(0,1)_\R$ for $1 \leq i,p \leq k$ be iid real gaussian random variables.
\begin{enumerate}
\item[(i)]  If  $k=o(\sqrt{n})$, then $(\sqrt{n} u_{i,p})_{1 \leq i,p \leq k}$ and $(\xi_{i,p})_{1 \leq i,p \leq k}$ differ by $o(1)$ in variation norm.  In other words, if $F: \R^{m^2} \to \R$ is any measurable function with $\|F\|_{L^\infty(\C^{k^2})} \leq 1$, then
\begin{equation}\label{fux}
|\E F\left( (\sqrt{n} u_{i,p})_{1 \leq i,p \leq k} \right) - \E F\left( (\xi_{i,p})_{1 \leq i,p \leq k} \right)| \leq o(1)
\end{equation}
where the error is uniform in $F$.
\item[(ii)] If instead we make the weaker assumption that $k=o(n/\log n)$, then it is possible to couple together $u_{i,p}$ and $\xi_{i,p}$ such that $\sup_{1 \leq i,p \leq k} |\sqrt{n} u_{i,p} - \xi_{i,p}|$ converges to zero in probability. 
\end{enumerate}

Similar conclusions hold for the unitary group $U(n)$, replacing the real gaussian $N(0,1)_\R$ by the complex gaussian $N(0,1)_\C$.
\end{theorem}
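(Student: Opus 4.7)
The natural strategy is the classical Gram--Schmidt construction of Haar measure. Let $X = (X_{ip})_{1 \le i,p \le n}$ be an $n \times n$ matrix with i.i.d.\ $N(0,1)_\R$ entries, and let $X_1,\ldots,X_n$ be its rows. Applying Gram--Schmidt to $X_1,\ldots,X_n$ produces an orthonormal basis $u_1,\ldots,u_n$; by the rotational invariance of the standard Gaussian on $\R^n$, the resulting orthogonal matrix $(u_1,\ldots,u_n)$ is Haar-distributed on $O(n)$ (up to a diagonal sign that can be absorbed). Explicitly,
\begin{equation*}
u_i = \frac{1}{r_i}\Bigl( X_i - \sum_{j<i} \langle X_i, u_j\rangle\, u_j\Bigr), \qquad r_i := \Bigl\| X_i - \sum_{j<i} \langle X_i, u_j\rangle\, u_j \Bigr\|.
\end{equation*}
By the standard orthogonal decomposition, $r_i^2$ has the $\chi^2_{n-i+1}$ distribution, and $\langle X_i, u_j\rangle$ for $j<i$ are independent $N(0,1)_\R$'s that are furthermore independent of $r_i$ and of the entries $u_{j,p}$ already constructed.

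The plan for part (ii) is to take the natural coupling $\xi_{i,p} := X_{i,p}$ (which is already $N(0,1)_\R$) and control the error
\begin{equation*}
\sqrt{n}\, u_{i,p} - \xi_{i,p} \;=\; \Bigl(\tfrac{\sqrt{n}}{r_i}-1\Bigr) X_{i,p} \;-\; \tfrac{\sqrt{n}}{r_i}\sum_{j<i} \langle X_i, u_j\rangle\, u_{j,p}.
\end{equation*}
For $i \le k$, $\chi^2$ concentration gives $|r_i/\sqrt{n}-1| = O(\sqrt{i/n}) + O(n^{-1/2}\sqrt{\log n})$ with overwhelming probability, so the first term is $O(\sqrt{\log n}\cdot \sqrt{k/n})$. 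For the second term, $\langle X_i,u_j\rangle$ is $N(0,1)_\R$ and $u_{j,p}$ has size $O(\sqrt{\log n/n})$ uniformly with overwhelming probability (by the $i=1$ analysis of the first column and induction, using that after the Gram--Schmidt step $u_j$ is uniform on a sphere independent of $X_i$), so by a Gaussian/Bernstein tail estimate and a union bound over $k^2$ pairs $(i,p)$, the sum is $O(\sqrt{k\log n/n})$ uniformly in $i,p \le k$. Both bounds tend to zero in probability when $k = o(n/\log n)$, which yields (ii).

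For part (i), one wants a total variation rather than a pointwise coupling. The cleanest route is to write out the explicit joint density of $(\sqrt n\, u_{i,p})_{i,p\le k}$ obtained from the Gram--Schmidt formulas (the Jacobian is controlled by the $r_i$'s and the $k \times k$ Gram matrix of the first $k$ columns of $X$, divided by $n^{k^2/2}$) and compare it to the standard Gaussian density $\prod_{i,p \le k}\frac{1}{\sqrt{2\pi}} e^{-\xi_{i,p}^2/2}$. Using $\chi^2$ and Wishart concentration, the ratio is $1 + o(1)$ on an event of probability $1-o(1)$, where the error is driven by the size of the $k \times k$ Wishart fluctuations $\frac{1}{n}X^{(k)}(X^{(k)})^T - I_k$, which have operator norm $O(k/\sqrt{n})$; this is $o(1)$ exactly in the regime $k = o(\sqrt n)$, producing the variation bound via the standard $L^1$-estimate on densities.

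The main obstacle is the part (i) bound, because variation distance is unforgiving: one really needs the Gram--Schmidt Jacobian and the induced density to match the Gaussian density up to $o(1)$ in $L^1$, and the threshold $k = o(\sqrt n)$ is sharp precisely because the Wishart fluctuations are of order $k/\sqrt n$. The coupling argument in (ii) is more robust (it tolerates $\log n$ losses from the union bound), so the tight $\sqrt n$ threshold appears only when the bound is phrased in the strong total variation sense. Finally, the complex/unitary case is handled by the identical argument with $N(0,1)_\C$ in place of $N(0,1)_\R$ throughout, since Gram--Schmidt on i.i.d.\ complex Gaussian vectors yields Haar measure on $U(n)$.
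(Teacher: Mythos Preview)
The paper does not prove this theorem; it is quoted from \cite{Jiang-2006}, \cite{Jiang} and used as a black box. So there is no in-paper argument to compare against. Your Gram--Schmidt strategy is precisely the one Jiang uses, and the outline is on the right track, but two points would need tightening before it actually delivers the stated thresholds.

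For part (ii): bounding $\sum_{j<i} u_{j,p}^2$ via the pointwise estimate $|u_{j,p}| = O(\sqrt{\log n/n})$ gives variance $O(k\log n/n)$ for the Gaussian sum; after a union bound over $k^2$ pairs this yields a sup-error of order $\sqrt{k}\,(\log n)/\sqrt{n}$, which only gives $k=o(n/\log^2 n)$. To reach $k=o(n/\log n)$ one must bound $\sum_{j<i} u_{j,p}^2$ directly (it is the sum of the first $i-1$ coordinates squared of a vector uniform on a sphere, hence concentrates near $i/n$), obtaining variance $O(k/n)$ without the extra $\log$. Also, the bias in $r_i/\sqrt{n}$ is $O(i/n)$, not $O(\sqrt{i/n})$, since $r_i^2\sim\chi^2_{n-i+1}$; this is harmless here but should be stated correctly.

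For part (i): your description of the density comparison is too vague to be a proof. The actual argument writes down the explicit density of the top-left $k\times k$ block $Y$ of a Haar orthogonal matrix, which is proportional to $\det(I - Y^T Y)^{(n-2k-1)/2}$ on $\{\|Y\|_{\mathrm{op}}<1\}$, and compares it in $L^1$ to the Gaussian density after rescaling by $\sqrt{n}$. The $o(1)$ variation bound comes from controlling $\frac{n-2k-1}{2}\log\det(I - \tfrac{1}{n}Z^T Z) + \tfrac{1}{2}\tr(Z^T Z)$ for $Z=\sqrt{n}\,Y$; the second-order term here is of size $O(k^2/n)$, which is where the sharp threshold $k=o(\sqrt{n})$ enters. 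Your phrase ``the Jacobian is controlled by the $r_i$'s and the $k\times k$ Gram matrix'' does not yet produce this density, and the Wishart heuristic $\|X^{(k)}(X^{(k)})^T/n - I_k\|_{\mathrm{op}} = O(k/\sqrt{n})$ is suggestive but not by itself a total-variation bound.
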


In \cite{Jiang-2006} it is also shown that the hypotheses $k=o(\sqrt{n})$, $k=o(n/\log n)$ in the above two results are best possible.  Of course, by symmetry, one can replace the top left $k \times k$ minor $(u_{i,p})_{1 \leq i,p \leq k}$ of the orthogonal matrix $(u_1,\ldots,u_n)$ with any other $k \times k$ minor and obtain the same results.  

As a corollary of Theorem \ref{sli}, we obtain an asymptotic for the distribution of eigenvector coefficients of GUE or GOE (normalizing using viewpoint (iii)).

\begin{corollary}[Eigenvector coefficients of GOE and GUE]\label{goegue} Let $M_n$ be drawn from GOE, and let $(u_1(M_n),\ldots,u_n(M_n))$ be the eigenvectors (normalized using viewpoint (iii); this is almost surely well-defined since the spectrum is almost surely simple).  For $1 \leq i,p \leq n$, let $\xi_{i,p}$ be independent random variables, with $\xi_{i,j} \equiv N(0,1)_\R$ for $p > 1$ and $\xi_{i,p} \equiv |N(0,1)_\R|$ for $p=1$.  Let $1 \leq k \leq n$, and let $1 \leq i_1 < \ldots < i_k \leq n$ and $1 \leq p_1 < \ldots < p_k \leq n$ be indices.
\begin{enumerate}
\item[(i)]  If $k=o(\sqrt{n})$, then $(\sqrt{n} u_{i_a,p_b}(M_n))_{1 \leq a,b \leq k}$ and $(\xi_{i_a,p_b})_{1 \leq a,b \leq k}$ differ by $o(1)$ in variation norm.  
\item[(ii)] If instead we make the weaker assumption that $k=o(n/\log n)$, then it is possible to couple together $M_n$ and $\xi_{i,p}$ such that $\sup_{1 \leq a,b \leq k} |\sqrt{n} u_{i_a,p_b}(M_n) - \xi_{i_a,p_b}|$ converges to zero in probability. 
\end{enumerate}

Similar conclusions hold for GUE, replacing the real gaussian $N(0,1)_\R$ by the complex gaussian $N(0,1)_\C$.  If one uses the random normalization (iv) instead of (iii), then the conclusions are the same, except that the absolute values are not present in the definition of $\xi_{i,1}$.
\end{corollary}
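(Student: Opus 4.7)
The plan is to deduce Corollary~\ref{goegue} from Theorem~\ref{sli} in three steps: (a) identify the eigenvector matrix of GOE (GUE) under the random normalization~(iv) with a Haar-distributed matrix on $O(n)$ (respectively $U(n)$); (b) reduce an arbitrary $k\times k$ minor to the top-left one via permutation invariance; (c) convert from~(iv) to the ad hoc normalization~(iii) by a measurable sign (phase) correction.

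For step (a), the orthogonal invariance $M_n \stackrel{d}{=} OM_nO^T$ of GOE, combined with the almost-sure simplicity of the spectrum and the random signs supplied by~(iv), makes the eigenvector matrix $(u_1(M_n),\ldots,u_n(M_n))$ left $O(n)$-invariant and hence Haar-distributed on $O(n)$; this identification is standard. For step (b), Haar measure on $O(n)$ is also right-invariant under permutation matrices, so the joint law of the $k\times k$ minor $(u_{i_a,p_b}(M_n))_{1\le a,b\le k}$ coincides with that of the top-left $k\times k$ minor, and Theorem~\ref{sli} applies to yield both parts~(i) and~(ii) of the corollary \emph{under normalization~(iv)}, with every entry approximated by an iid $N(0,1)_\R$.

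For step (c), I would exploit that GOE is a continuous ensemble, so $u_{i,1}(M_n)\neq 0$ almost surely and hence the first nonzero index $p_i^\ast$ equals $1$ almost surely; normalization~(iii) is then obtained from~(iv) by multiplying each eigenvector row by the sign of its first coordinate (taken in~(iv)). Applying Theorem~\ref{sli} to the enlarged column set $\{1\}\cup\{p_1,\ldots,p_k\}$, of size at most $k+1$ (still within the allowed range), and then performing the sign correction sends the $(a,1)$-entry to its absolute value while, for $p_b>1$, multiplying the $(a,b)$-entry by $\sgn(u_{i_a,1})$. On the iid Gaussian surrogate, the symmetry of $N(0,1)_\R$ together with the independence of sign and magnitude of a Gaussian produces precisely the target distribution in the corollary: iid $N(0,1)_\R$ entries, with an $|N(0,1)_\R|$ in the first column when $p_1=1$. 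Part~(i) is immediate since total variation is preserved under measurable postprocessing. The mildly delicate point, and the main obstacle I would flag, is part~(ii): the sign map is discontinuous at $0$, so I would verify that $\sqrt{n}\,u_{i_a,1}(M_n)$ and $\xi_{i_a,1}$ share a common sign with probability $1-o(1)$; this follows from $|\xi_{i_a,1}|$ being bounded away from $0$ with probability $1-o(1)$ combined with the $o(1)$-coupling from Theorem~\ref{sli}. The GUE case is completely parallel, with $\pm$ signs replaced by $U(1)$ phases and $N(0,1)_\R$ by $N(0,1)_\C$.
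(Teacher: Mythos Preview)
Your proposal is correct and follows precisely the route the paper intends: the paper does not give an explicit proof, instead presenting the result as an immediate corollary of Theorem~\ref{sli} together with the facts (stated in the surrounding text) that under normalization~(iv) the eigenvector matrix of GOE/GUE is Haar-distributed on $O(n)$/$U(n)$ and that by symmetry any $k\times k$ minor has the same law as the top-left one. Your steps (a)--(c) are exactly the details one must supply to make this rigorous, and your handling of the (iv)-to-(iii) conversion via the enlarged column set and measurable postprocessing (including the sign-agreement argument for part~(ii)) fills in what the paper leaves to the reader.
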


The main objective of this paper is to develop analogues of Corollary \ref{goegue} for the more general Wigner ensembles from Definition \ref{def:Wignermatrix}.  In particular, we would like to consider ensembles $M_n$ which are allowed to be discrete instead of continuous.

One immediate difficulty that arises in the discrete setting is that one can now have a non-zero probability that the spectrum is non-simple.  However, we have the following \emph{gap theorem} from \cite{TVuniv}, \cite{TVedge}, \cite{TVscv}:

\begin{theorem}[Gap theorem]\label{gap}  Suppose that $M_n$ is a Wigner random matrix obeying Condition {\condone} with a sufficiently large constant $C_0$, and let $c_0>0$ be independent of $n$.  Write $A_n := \sqrt{n} M_n$ for the rescaled matrix. Then for any $1 \leq i < n$, one has
$$ \P( |\lambda_{i+1}(A_n) - \lambda_i(A_n)| \leq n^{-c_0} ) \le n^{-c_1}$$
for all sufficiently large $n$, where $c_1>0$ depends only on $c_0$.
\end{theorem}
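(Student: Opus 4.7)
The plan is to reveal the last row and column of $A_n$ last, and use the Schur complement to expose a secular equation whose roots are the eigenvalues of $A_n$. Write
\[
A_n = \begin{pmatrix} A_{n-1} & X \\ X^* & \xi_{nn}/\sqrt n \end{pmatrix},
\]
let $\mu_1 \leq \dots \leq \mu_{n-1}$ and $v_1,\dots,v_{n-1}$ be the eigenvalues and an orthonormal eigenbasis of $A_{n-1}$, and set $r_j := |v_j^* X|^2$. Each eigenvalue $\lambda$ of $A_n$ outside $\mathrm{spec}(A_{n-1})$ then satisfies
\[
\xi_{nn}/\sqrt n - \lambda \;=\; \sum_{j=1}^{n-1} \frac{r_j}{\mu_j - \lambda},
\]
and Cauchy interlacing gives $\lambda_i(A_n) \leq \mu_i \leq \lambda_{i+1}(A_n)$, so any gap $\lambda_{i+1}(A_n) - \lambda_i(A_n) \leq n^{-c_0}$ traps both consecutive eigenvalues in the window $I := [\mu_i - n^{-c_0}, \mu_i + n^{-c_0}]$.

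The first step, purely deterministic, is to show that two such roots near the pole $\mu_i$ force the residue $r_i$ to be very small. Clearing the pole, one rewrites the secular equation as $(\mu_i - \lambda)(\xi_{nn}/\sqrt n - \lambda - g(\lambda)) = r_i$ with $g(\lambda) := \sum_{j \neq i} r_j/(\mu_j - \lambda)$; applying Rolle's theorem between the two roots and comparing with the equation at either root yields
\[
r_i \;\leq\; 2 n^{-2c_0}\bigl(1 + \sup_{I}|g'|\bigr).
\]
The probabilistic step is to then bound the small-ball probability of $r_i = |v_i^* X|^2$ conditionally on $A_{n-1}$. On the overwhelming-probability event that the eigenvectors of $A_{n-1}$ are delocalized, so that $\|v_i\|_\infty = O(n^{-1/2+o(1)})$, the random variable $\sqrt n\, v_i^* X$ is a normalized sum of jointly independent entries with enough moments (via \condone\ with $C_0$ large), and a Berry--Esseen / Esseen-type anticoncentration inequality applied coordinate-wise gives
\[
\P\bigl(|v_i^* X|^2 \leq s \,\big|\, A_{n-1}\bigr) \;\lesssim\; n \sqrt{s} \;+\; n^{-\omega(1)}.
\]
Combining the two steps with the generic bulk bound $\sup_{I}|g'| \leq n^{1+o(1)}$ gives $\P(\text{small gap at }i) \leq n^{3/2 - c_0 + o(1)}$, which is already the desired $n^{-c_1}$ bound once $c_0$ is sufficiently large; for arbitrary $c_0 > 0$ one bootstraps, iteratively pushing $c_1$ upwards.

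The main obstacle is the self-referential character of the bound on $\sup_I |g'|$: the contributions of $j = i \pm 1$ to $g'(\lambda) = \sum_{j \neq i} r_j/(\mu_j - \lambda)^2$ are tame only if the neighbouring eigenvalues $\mu_{i\pm 1}$ of $A_{n-1}$ lie well outside $I$, which is itself a gap statement one dimension down. This is the loop closed by the two-stage bootstrap of \cite{TVuniv,TVedge,TVscv}: first establish a very crude gap bound for $A_{n-1}$ using only elementary spacing information from the semicircle law, then feed that bound back into the deterministic step above to sharpen the exponent for $A_n$ and iterate. A secondary technicality is that the argument as described is clearest in the bulk; edge indices $i$ require edge rigidity inputs, handled in \cite{TVedge} by the same secular-equation framework with different deterministic estimates on $g$.
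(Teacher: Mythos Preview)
The paper does not actually prove this theorem: its ``proof'' consists entirely of citations to \cite[Theorem 19]{TVuniv}, \cite[Theorem 1.14]{TVedge}, and \cite[Section 2]{TVscv}, together with a pointer to \cite{ESY3}. Your sketch is, in outline, the argument carried out in those references (Schur complement/secular equation, interlacing to localize the two eigenvalues near the pole $\mu_i$, a deterministic bound forcing the residue $r_i$ to be small, anticoncentration for $|v_i^*X|^2$ via eigenvector delocalization, and the self-referential bootstrap on neighbouring gaps). So at the level of strategy you are aligned with what the paper invokes.

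That said, there are two quantitative slips worth fixing. First, since $A_n=\sqrt n\,M_n$, the bottom-right entry in your block decomposition is $\sqrt n\,\xi_{nn}$, not $\xi_{nn}/\sqrt n$; this is cosmetic. Second, and more consequentially, your stated small-ball bound $\P(|v_i^*X|^2\le s\mid A_{n-1})\lesssim n\sqrt s$ has the wrong scaling. With $X$ the last column of $A_n$, the entries of $X$ have variance $n$, so $v_i^*X$ has variance $n$; after normalising, Esseen's inequality together with $\|v_i\|_\infty\le n^{-1/2+o(1)}$ gives $\P(|v_i^*X|\le t)\lesssim t/\sqrt n+n^{-1/2+o(1)}$, i.e.\ $\P(r_i\le s)\lesssim\sqrt{s/n}+n^{-1/2+o(1)}$. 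Plugging this into your deterministic bound $r_i\le 2n^{-2c_0}(1+\sup_I|g'|)$ with the heuristic $\sup_I|g'|\le n^{1+o(1)}$ yields $\P(\text{small gap})\lesssim n^{-c_0+o(1)}$ rather than $n^{3/2-c_0+o(1)}$; so in fact the exponents are more favourable than you wrote, and the bootstrap is needed not because the exponent is initially positive but, as you correctly note at the end, because the bound on $\sup_I|g'|$ itself presupposes control on nearby gaps of $A_{n-1}$.
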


\begin{proof} In the bulk case $\eps n < i < (1-\eps) n$ assuming Condition {\condo}, see \cite[Theorem 19]{TVuniv}.  For the extension to the edge case, see \cite[Theorem 1.14]{TVedge}.  For the relaxation of Condition {\condo} to Condition {\condone} (with a sufficiently large $C_0$), see \cite[Section 2]{TVscv}. See also \cite{ESY3} for some related level repulsion estimates (assuming some additional regularity hypotheses on $M_n$).
\end{proof}

Of course, one has $\lambda_i(A_n) = \sqrt{n} \lambda_i(M_n)$.
From the above theorem and the union bound, we see that there is an absolute constant $c>0$ such that if $m = O(n^c)$ and $1 \leq i_1 < \ldots < i_m \leq n$, and $M_n$ obeys Condition {\condone} with a sufficiently large $C_0$, then with probability $1-O(n^{-c})$, the eigenvalues $\lambda_{i_1}(M_n),\ldots,\lambda_{i_m}(M_n)$ will all occur with multiplicity one, so that one can meaningfully normalise the eigenvectors $u_{i_1}(M_n),\ldots,u_{i_m}(M_n)$ according to any of the viewpoints (i), (ii), (iii), (iv) mentioned previously, outside of an exceptional event of probability $O(n^{-c})$.  On that exceptional event, we define the orthonormal eigenvector basis $u_i(M_n)$ (and related objects such as the rank one projection $P_i(M_n)$) in some arbitrary (measurable) fashion.

\subsection{Main results}

We now turn to the distribution of the coefficients of the eigenvectors.  As the ensembles $M_n$ may be discrete, the eigenvector coefficients $u_{i,j}(M_n)$ may be discrete also, and so one does not expect to have convergence in variation distance any more.  Instead, we will consider the weaker notion of \emph{vague convergence}, which resembles the condition \eqref{fux}, but with $F$ now required to be compactly supported, and either continuous or smooth.  We need the following definition:

\begin{definition}[Matching moments]  Let $k, l \geq 1$.  Two Wigner random matrices $M_n = (\xi_{ij})_{1 \leq i,j \leq n}$ and $M'_n = (\xi'_{ij})_{1 \leq i,j \leq n}$ are said to \emph{match to order $k$} off the diagonal, and \emph{match to order $l$} on the diagonal, if one has $\E \Re(\xi_{ij})^a \Im(\xi_{ij})^b = \E \Re(\xi'_{ij})^a \Im(\xi'_{ij})^b$ whenever $a, b \geq 0$ and $1 \leq i \leq j \leq n$ are integers such that $a+b \leq k$ (if $i<j$) or $a+b \leq l$ (if $i=j$).
\end{definition}

We can now give our first main result, which partially extends Corollary \ref{goegue} to other Wigner ensembles:

\begin{theorem}[Eigenvector coefficients of Wigner matrices]\label{gg-main} There are absolute constants $\delta, C_0 > 0$ such that the following statement holds.  Let $M_n$ be drawn from a Wigner ensemble obeying Condition {\condone} with constant $C_0$ that matches GOE to order $4$ off the diagonal and to order $2$ on the diagonal (which, in particular, forces $M_n$ to be almost surely real symmetric), and let $(u_1(M_n),\ldots,u_n(M_n))$ be the eigenvectors (normalized using viewpoint (iii)).  For $1 \leq i,p \leq n$, let $\xi_{i,p}$ be independent random variables, with $\xi_{i,p} \equiv N(0,1)_\R$ for $p > 1$ and $\xi_{i,p} \equiv |N(0,1)_\R|$ for $p=1$.  Let $1 \leq k \leq n$, and let $1 \leq i_1 < \ldots < i_k \leq n$ and $1 \leq p_1 < \ldots < p_k \leq n$ be indices.
\begin{enumerate}
\item[(i)] (Vague convergence) If $k=O(n^{\delta})$, then one has
\begin{equation}\label{sil}
\E F( (\sqrt{n} u_{i_a,p_b})_{1 \leq a,b \leq k} ) - \E F( (\xi_{i_a,p_b})_{1 \leq a,b \leq k} )| = o(1)
\end{equation}
whenever $F: \C^{k^2} \to \R$ is a smooth function obeying the bounds
$$ |F(x)| = O(1)$$
and
$$ |\nabla^j F(x)| = O(n^\delta)$$
for all $x \in \C^{k^2}$ and $0 \leq j \leq 5$.
\item[(ii)] If instead we make the stronger assumption that $k =O(1)$, then it is possible to couple together $M_n$ and $\xi_{i,p}$ such that $\sup_{1 \leq a,b \leq k} |\sqrt{n} u_{i_a,p_b}(M_n) - \xi_{i_a,p_b}|$ converges to zero in probability.   In particular, this implies that $(\sqrt{n}u_{i_a,p_b})_{1 \leq a,b \leq k}$ converges to $(\xi_{i_a,p_b})_{1 \leq a,b \leq k}$ in distribution.
\end{enumerate}
The bounds are uniform in the choice of $i_1,\ldots,i_k,p_1,\ldots,p_k$, and in the choice of $F$, provided that the implied constants in the bounds on $F$ are kept fixed.
Similar conclusions hold for GUE, replacing the real gaussian $N(0,1)_\R$ by the complex gaussian $N(0,1)_\C$.

If one uses the random normalization (iv) instead of (iii), then the conclusions are the same, except that the absolute values are not present in the definition of $\xi_{i,1}$.
\end{theorem}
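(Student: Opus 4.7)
The plan is to prove Theorem~\ref{gg-main} via a four moment theorem for eigenvectors, in the spirit of the Tao--Vu four moment theorem for eigenvalues. If $M'_n$ denotes a GOE matrix (which matches $M_n$ to four moments off-diagonal and two moments on-diagonal by hypothesis), it suffices to prove
\[ \bigl|\E F\bigl((\sqrt{n}\,u_{i_a,p_b}(M_n))_{1\le a,b\le k}\bigr) - \E F\bigl((\sqrt{n}\,u_{i_a,p_b}(M'_n))_{1\le a,b\le k}\bigr)\bigr| = o(1) \]
for admissible $F$; Corollary~\ref{goegue}(i) then replaces $M'_n$ by the Gaussian variables. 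To avoid the sign ambiguity of viewpoint (iii), I would first recast the problem in terms of the intrinsic projection entries $P_{i,p,q}(M_n) = u_{i,p}(M_n)\overline{u_{i,q}(M_n)}$ from viewpoint (i). Under normalization (iii) one has $u_{i,1}=\sqrt{P_{i,1,1}}$ and $u_{i,p}=P_{i,1,p}/\sqrt{P_{i,1,1}}$ for $p>1$; eigenvector delocalization (available under Condition~\condone\ with sufficiently large $C_0$) bounds $P_{i,1,1}$ from below by $n^{-1-\eps}$ with overwhelming probability, so up to a negligible error the test functional reduces to a bounded smooth functional of the relevant projection entries $nP_{i_a,p,q}(M_n)$ with polynomially bounded derivatives.

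The comparison is then executed by a Lindeberg swap over the $O(n^2)$ independent entries of $M_n$. For each swap of a single entry $\xi_{ab}$ with its GOE counterpart $\xi'_{ab}$, Taylor expand the test functional as a function of that scalar entry to order five. Because $M_n$ and $M'_n$ match to order $4$ off the diagonal (and $2$ on the diagonal), the first four Taylor coefficients agree in expectation, leaving only the fifth-order remainder, of size $\bigl(\E|\xi_{ab}|^5+\E|\xi'_{ab}|^5\bigr)\sup_t |\partial_t^5 (F\circ P)|$. Using Condition~\condone\ with a standard truncation to bound the fifth moments by $O(1)$, the total discrepancy over the $n^2$ swaps is at most $n^2$ times the supremum of the fifth derivative in a single entry.

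The heart of the argument, and the main obstacle, is controlling those fifth derivatives. Standard Hadamard-type perturbation formulae express $\partial^j P_{i,p,q}/\partial\xi_{ab}^j$ as sums over intermediate indices with denominators consisting of eigenvalue gaps $\lambda_i(M_n)-\lambda_j(M_n)$. These can be dangerously small, but Theorem~\ref{gap} together with a union bound over the relevant indices ensures that, with probability $1-O(n^{-c})$, every relevant gap is at least $n^{-c_0-1/2}$ for a small absolute $c_0>0$. Inserting this lower bound, together with eigenvector delocalization $|u_{j,p}|=O(n^{-1/2+\eps})$ for the numerators, into the perturbation formulae produces fifth-derivative bounds small enough that, after choosing $\delta>0$ sufficiently small relative to $c_0$ and $\eps$ to absorb the $O(n^\delta)$ growth of $F$ and its derivatives, the net Lindeberg discrepancy is $o(1)$. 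This establishes part (i).

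Part (ii) follows by upgrading vague convergence to coupling. Since $k=O(1)$ and the limiting Gaussian distribution on $\R^{k^2}$ is absolutely continuous, part (i) together with Corollary~\ref{goegue}(ii) (which supplies a coupling on the GOE side) and a standard Skorokhod/Strassen-type coupling construction yields a coupling of $\sqrt{n}\,u_{i_a,p_b}(M_n)$ with $\xi_{i_a,p_b}$ for which the supremum difference tends to zero in probability.
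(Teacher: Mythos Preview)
Your overall architecture is the right one and matches the paper: pass to the intrinsic projection coefficients $P_{i,p,q}$, recover $u_{i,p}$ from $P_{i,1,p}/\sqrt{P_{i,1,1}}$, and run a Lindeberg swap with Taylor expansion to fifth order.  However, there is a genuine gap in the step where you obtain a lower bound on $P_{i,1,1}$.

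You write that ``eigenvector delocalization \ldots\ bounds $P_{i,1,1}$ from below by $n^{-1-\eps}$ with overwhelming probability.''  This is not correct: delocalization is the \emph{upper} bound $|u_{i,j}|\ll n^{-1/2+\eps}$, hence $P_{i,1,1}=|u_{i,1}|^2\ll n^{-1+2\eps}$.  There is no general deterministic or high-probability lower bound on a single eigenvector coordinate for an arbitrary Wigner matrix; indeed for discrete ensembles an individual entry can vanish.  Without such a lower bound the map $(P_{i,1,1},P_{i,1,p})\mapsto P_{i,1,p}/\sqrt{P_{i,1,1}}$ is not uniformly smooth, and the reduction to a functional of the $P$'s fails.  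The paper resolves this by a bootstrap: it first proves the four moment theorem for the \emph{projections} $nP_{i,p,q}$ themselves (Theorem~\ref{theorem:main0}), which are intrinsic and require no lower bound; it then applies that theorem with a cutoff test function $F_0$ to transfer from GOE the statement that $nP_{i_a,1,1}\ge n^{-10\delta}$ asymptotically almost surely (for GOE this is immediate since $\sqrt{n}\,u_{i,1}\sim |N(0,1)|$).  Only after this lower bound is in hand does one compose with the reconstruction map to recover the $u_{i,p}$.  Your argument needs this two-step structure.

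A second, less fatal but still real, issue is the derivative control in the Lindeberg step.  The perturbation expansions for $\partial_t^j P_{i,p,q}$ involve \emph{all} inverse gaps $(\lambda_i-\lambda_j)^{-1}$, not just consecutive ones, and a union bound of Theorem~\ref{gap} over the $n$ indices $j$ does not survive.  The paper controls instead the aggregate $Q_i=\sum_{j\ne i}|\lambda_i-\lambda_j|^{-2}$ (Lemma~\ref{q-bound}) and uses a dyadic decomposition of the spectrum (Proposition~\ref{better-cor}) to turn delocalization plus eigenvalue separation into the needed $n^{-m+O(\eps_1)}$ bounds on $\nabla^m P_{i,p,q}$.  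Simply inserting a uniform gap lower bound of size $n^{-1/2-c_0}$ into the Hadamard formula would give bounds that are too weak by powers of $n$ to close the Lindeberg sum.
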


The bound $k=O(1)$ in (ii) can be extended by our method (with some effort) to $k=o(\log^{1/2} n)$, but this still falls far short of the analogous range in Corollary \ref{goegue}. It is reasonable to expect that these bounds are not best possible (particularly if one places some additional regularity hypotheses on the coefficients of $M_n$).

We will deduce Theorem \ref{gg-main} from Corollary \ref{goegue} by establishing a \emph{four moment theorem for eigenvectors}, which is the main technical result of this paper:

\begin{theorem}[Four Moment Theorem for eigenvectors]\label{theorem:main0} There is an absolute constant $C_0>0$ such that for every sufficiently small constant $c_0>0$ there is a constant $\delta >0$ such that the following holds. 
 Let $M_n, M'_n$ be two Wigner random matrices satisfying {\condone} with constant $C_0$ that match to order $4$ off the diagonal and to order $2$ on the diagonal.  Let $k \leq n^\delta$, and let $G: \R^k \times \C^{k} \to \R$ be a smooth function obeying the derivative bounds
\begin{equation}\label{G-deriv}
|\nabla^j G(x)| \leq n^{c_0}
\end{equation}
for all $0 \leq j \leq 5$ and $x \in \R^k \times \C^{k}$.
 Then for any $1 \le i_1, i_2, \ldots, i_k \le n$ and $1 \leq p_1,\ldots,p_k,q_1,\ldots,q_k \leq n$, and for $n$ sufficiently large depending on $\eps,\delta,c_0, C_0$ we have
\begin{equation} \label{eqn:approximation-0}
|\E G( \Phi(\sqrt{n} M_n) ) - \E G( \Phi(\sqrt{n} M'_n) )| \leq n^{-c_0}
\end{equation}
where for any matrix $M$ of size $n$, $\Phi(M) \in \R^k \times \C^{k}$ is the tuple
$$ \Phi(M) := \left( (\lambda_{i_a}(M))_{1 \leq a \leq k}, ( n P_{i_a,p_a,q_a}(M) )_{1 \leq a \leq k} \right).$$
The bounds are uniform in the choice of $i_1,\ldots,i_k,p_1,\ldots,p_k,q_1,\ldots,q_k$.
\end{theorem}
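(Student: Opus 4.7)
The plan is to extend the Lindeberg swapping strategy used to prove the four moment theorem for eigenvalues. First, I would fix an enumeration of the $O(n^2)$ independent upper-triangular entries of $M_n$ and $M'_n$ and interpolate: let $M^{(t)}$ agree with $M'_n$ in the first $t$ positions and with $M_n$ in the remaining ones, so that $M^{(0)} = M_n$ and $M^{(T)} = M'_n$ for $T = O(n^2)$. It suffices to show that each single-entry swap changes $\E G(\Phi(\sqrt{n}\,\cdot))$ by at most $O(n^{-c_0-2-c})$ for some small $c > 0$; summing over $t$ then yields \eqref{eqn:approximation-0}.

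For a single swap, write the two matrices as $M_0 + \xi V_{ab}$ and $M_0 + \xi' V_{ab}$, where $M_0$ is $M^{(t-1)}$ with the swapped entry zeroed out and $V_{ab}$ is the corresponding Hermitian matrix unit. Taylor-expanding $\Psi(\xi) := G(\Phi(\sqrt{n}(M_0+\xi V_{ab})))$ around $\xi=0$ up to order $5$ (order $3$ on the diagonal), the matching-moments hypothesis kills the zeroth through fourth (resp.\ second) Taylor coefficients. The remainder is bounded by $(\E|\xi|^5 + \E|\xi'|^5)\sup|\Psi^{(5)}| = O(n^{-5/2})\sup|\Psi^{(5)}|$ by Condition \condone, and similarly $O(n^{-3/2})\sup|\Psi^{(3)}|$ on the diagonal. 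By the assumption $|\nabla^j G| \le n^{c_0}$ and the chain rule, matters reduce to bounding $|\partial_\xi^j \Phi(\sqrt{n}(M_0 + \xi V_{ab}))|$ for $0 \le j \le 5$ on a small neighborhood of $\xi = 0$.

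For the eigenvalue components, the classical Hadamard-type higher-order perturbation formulas express $\partial_\xi^j \lambda_i$ as sums in eigenvector coefficients and eigenvalue-gap reciprocals. For the projection components $nP_{i,p,q}$, I would use the contour representation
\[
P_i(M) = \frac{1}{2\pi \sqrt{-1}} \oint_{\Gamma_i} (zI - M)^{-1}\,dz
\]
for a small circle $\Gamma_i$ around $\lambda_i$, differentiate under the integral via $\partial_M (zI - M)^{-1} = (zI - M)^{-1} V_{ab} (zI - M)^{-1}$, and expand each resolvent in the eigenbasis. The result is a sum over spectral indices whose summands are products of eigenvector coefficients divided by products of eigenvalue gaps. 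To control these sums I would restrict to a good event $E$ of probability $1 - O(n^{-c})$ on which (i) an iterated form of Theorem \ref{gap} provides quantitative separation $|\lambda_{i+m}(A_n) - \lambda_i(A_n)| \gtrsim |m|\, n^{-c_0'}$ over a useful range of $m$, and (ii) eigenvectors are delocalized, $\sup_p |u_{i,p}(M)| = O(n^{-1/2 + o(1)})$; both hold under Condition \condone with $C_0$ large. On $E^c$ the boundedness $|G| = O(1)$ and $\P(E^c) = O(n^{-c})$ make the contribution negligible once $c > c_0$.

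The main obstacle is the derivative bounds for $nP_{i,p,q}$. Each iterated derivative accumulates additional spectral sums and additional gap reciprocals, so already the third derivative is far less benign than the corresponding derivative of an eigenvalue. Obtaining a bound of the form $|\partial_\xi^j (n P_{i_a,p_a,q_a})| = O(n^{O(\delta)})$ on the good event requires the simultaneous use of rigidity and level repulsion (to tame the gap denominators) and eigenvector delocalization (to tame the coefficient products). The number ``four'' of matching off-diagonal moments is precisely what compensates for the number of gap reciprocals one can accumulate before these bounds degrade: each swap then costs $O(n^{-5/2 + c_0 + O(\delta)})$, and summing over $O(n^2)$ swaps gives $O(n^{-1/2 + c_0 + O(\delta)}) \le n^{-c_0}$ once $c_0, \delta$ are small.
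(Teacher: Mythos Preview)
Your overall architecture---Lindeberg swapping, fifth-order Taylor expansion, and reduction to derivative bounds on $\lambda_i$ and $nP_{i,p,q}$---matches the paper's, and the contour-integral representation of $P_i$ is a legitimate alternative to the explicit recursions the paper imports from \cite{TVuniv}. But there is a genuine gap in how you handle the ``good event'' $E$. The derivative bounds are needed at each of the $O(n^2)$ intermediate matrices $M^{(t)}$, and for every value of the swapped entry in the full Taylor remainder range (not merely ``a small neighborhood of $\xi=0$''). Theorem~\ref{gap} gives nearest-neighbor separation only with \emph{high} probability $1 - O(n^{-c})$ for some small $c$, not with \emph{overwhelming} probability; a union bound over $O(n^2)$ swaps therefore costs $O(n^{2-c})$, which is useless. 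The paper's resolution is to append $Q_{i_a}(A) := \sum_{j \ne i_a} |\lambda_j(A) - \lambda_{i_a}(A)|^{-2}$ as an additional argument of $G$ and truncate $G$ to the region $\{Q_{i_a} \le n^{c_1}\}$, invoking the gap theorem \emph{once} (Lemma~\ref{q-bound}) to show this truncation is harmless. In the truncated problem the derivative bounds are only required on the support of $G$, where the small-gap information is already built in; the remaining ``good configuration'' conditions needed at each swap---eigenvalue separation $|\lambda_i - \lambda_{i_a}| \ge n^{-\eps_1}|i - i_a|$ only for $|i - i_a| \ge n^{\eps_1}$ (a consequence of rigidity, not level repulsion) and delocalization---hold with overwhelming probability, so the union bound over swaps is now harmless. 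A stability argument from \cite{TVuniv} then propagates the $Q$-bound from the single point where $G \ne 0$ to the whole disk $|z| \le n^{1/2+\eps_1}$. Without this $Q$-truncation device your scheme does not close.

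A smaller issue: you have two cancelling scaling errors. In the paper's convention the entries $\xi_{pq}$ of $M_n$ have unit variance, so $\E|\xi|^5 = O(1)$, not $O(n^{-5/2})$; correspondingly, the correct target is $|\partial_\xi^j(nP_{i,p,q})| = O(n^{-j/2 + O(\delta)})$, not merely $O(n^{O(\delta)})$. With both corrections the per-swap cost is still $O(n^{-5/2 + c_0 + O(\delta)})$, but as literally stated your bounds do not give this.
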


The deduction of Theorem \ref{gg-main} from Corollary \ref{goegue} and Theorem \ref{theorem:main0} is routine and is performed in Section \ref{gg-main-sec}.

Theorem \ref{theorem:main0} is an extension of the \emph{four moment theorem for eigenvalues} established in \cite{TVuniv}, \cite{TVedge}, \cite{TVscv}.  Indeed, the latter theorem is essentially the special case of Theorem \ref{theorem:main0} in which $k=O(1)$, and $G$ only depends on the first $k$ components $(\sqrt{n} \lambda_{i_a}(M_n))_{1 \leq a \leq k}$ of $\Phi(M_n)$.  
Unsurprisingly, the proof of Theorem \ref{theorem:main0} will rely heavily on the machinery developed in \cite{TVuniv}, \cite{TVedge}, \cite{TVscv}. 

 Theorem \ref{theorem:main0} was announced at the AIM workshop ``Random matrices'' in December 2010. 
We have found out that recently  a result in the same spirit has been proved by Knowles and Yin \cite{knowles}
 using  a somewhat different method. Knowles and Yin handled the   $k=O(1)$ case with Condition {\condone} 
  replaced by the stronger Condition {\condo}. Furthermore, they  needeed control on $k+5$ derivatives of $G$ rather than just $5$ derivatives, and also a level repulsion hypothesis similar to \eqref{mo}, \eqref{no} below.  On the other hand, their result holds for generalized Wigner matrices.

  The need for four matching moments in Theorem \ref{theorem:main0} is believed to be necessary; see \cite{TV-nec}.  However, we conjecture that Corollary \ref{goegue} continues to hold without the matching moment hypothesis.

We can use Theorem \ref{theorem:main0} (together with other tools)  to obtain a four moment theorem 
for the resolvent (or Green's function) coefficients
\begin{equation}\label{green-eq}
\left(\frac{1}{\sqrt{n}} M_n-zI\right)^{-1}_{pq} = \sum_{i=1}^n \frac{1}{\lambda_i(A_n)-n z} n P_{i,p,q}(A_n),
\end{equation}
where $A_n$ is the rescaled matrix $A_n := \sqrt{n} M_n$ (so in particular $u_{i}(A_n) = u_i(M_n)$ and $P_i(A_n) = P_i(M_n)$).

More precisely, we have

\begin{theorem}[Four moment theorem for resolvents up to the real axis]\label{green} 
 Let $M_n, M'_n$ be two Wigner random matrices satisfying {\condo} that match to order $4$ off the diagonal and to order $2$ on the diagonal for some sufficiently large $C_0$.  Let $z = E + i \eta$ for some $E \in \R$ and some $\eta > 0$.  We assume the \emph{level repulsion hypothesis} that for any $c>0$, one has with high probability that
\begin{equation}\label{mo}
 \inf_{1 \leq i \leq n} |\lambda_i(\sqrt{n} M_n) - nz| \geq n^{-c}
\end{equation}
and
\begin{equation}\label{no}
 \inf_{1 \leq i \leq n} |\lambda_i(\sqrt{n} M'_n) - nz| \geq n^{-c}.
\end{equation}
Let $1 \leq p,q \leq n$.  Then for any smooth function $G: \C \to \C$ obeying the bounds $\nabla^j G(x) = O(1)$ for all $x \in \C$ and $0 \leq j \leq 5$, one has
$$ \E G\left( (\frac{1}{\sqrt{n}} M_n-zI)^{-1}_{pq} \right) - \E G\left( (\frac{1}{\sqrt{n}} M'_n-zI)^{-1}_{pq} \right) = O(n^{-c_0})$$
for some constant $c_0>0$ independent of $n$.
\end{theorem}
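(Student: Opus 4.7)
The strategy is to apply the eigenvector four moment theorem (Theorem \ref{theorem:main0}) to a localized version of the spectral expansion \eqref{green-eq}, handling the remaining ``far'' contribution via the local semicircle law. Fix a small $\delta > 0$ (to be chosen), set $L := n^\delta$, and choose a smooth cutoff $\chi \in C_c^\infty(\R)$ with $\chi = 1$ on $[-1/2,1/2]$ and support in $[-1,1]$. Decomposing
\[
\frac{1}{\lambda - nz} = \chi\!\left(\tfrac{\lambda - nE}{L}\right)\frac{1}{\lambda - nz} + \left(1 - \chi\!\left(\tfrac{\lambda - nE}{L}\right)\right)\frac{1}{\lambda - nz} =: \phi_{\mathrm{near}}(\lambda) + \phi_{\mathrm{far}}(\lambda),
\]
and substituting into \eqref{green-eq}, I split $R_{pq} := ((M_n/\sqrt{n} - zI)^{-1})_{pq} = R^{\mathrm{near}}_{pq} + R^{\mathrm{far}}_{pq}$, where $R^{\bullet}_{pq}(M) = \sum_i \phi_{\bullet}(\lambda_i(\sqrt n M))\,nP_{i,p,q}(\sqrt n M)$.

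By eigenvalue rigidity for Wigner matrices under {\condo} (a standard strengthening of Theorem \ref{gap}), with overwhelming probability the index set $I := \{i : |\lambda_i(\sqrt n M_n) - nE| \le L\}$ has cardinality $K = O(n^\delta)$; list $I = \{i_1 < \cdots < i_K\}$. Then $R^{\mathrm{near}}_{pq}(M_n) = F((\lambda_{i_a}(\sqrt n M_n))_a, (nP_{i_a,p,q}(\sqrt n M_n))_a)$ where
\[
F(x,y) := \sum_{a=1}^K \chi\!\left(\tfrac{x_a - nE}{L}\right)\frac{y_a}{x_a - nz}.
\]
Under the level-repulsion hypothesis \eqref{mo}, each denominator satisfies $|x_a - nz| \ge n^{-c}$ for any prescribed $c>0$ on a high-probability event, so $F$ and its derivatives up to order $5$ are polynomially bounded in $n$. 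Composing with $G$ (whose derivatives are $O(1)$) and choosing $c, \delta$ small enough, the bound $|\nabla^j(G\circ F)| \le n^{c_0'}$ required by Theorem \ref{theorem:main0} is satisfied. Applying that theorem (to the real and imaginary parts of $G$ separately) yields
\[
\bigl|\E G(R^{\mathrm{near}}_{pq}(M_n)) - \E G(R^{\mathrm{near}}_{pq}(M'_n))\bigr| = O(n^{-c_0'}).
\]

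For the far part, I claim that $R^{\mathrm{far}}_{pq}(M_n) = r^{\mathrm{far}} + O(n^{-c''})$ with overwhelming probability, for some $c''>0$ and a deterministic quantity $r^{\mathrm{far}}$ depending only on $p,q,z,L,n$ and the semicircle density (and thus identical for $M_n$ and $M'_n$). Indeed, $R^{\mathrm{far}}_{pq} = n[\phi_{\mathrm{far}}(\sqrt n M_n)]_{pq}$, where $\phi_{\mathrm{far}}$ is smooth with $|\phi_{\mathrm{far}}^{(j)}| = O(L^{-j-1})$ on its support (which lies at distance $\ge L/2$ from $nz$). The Helffer--Sjöstrand formula expresses this entry as an integral of resolvent entries $((M_n/\sqrt n) - w)^{-1}_{pq}$ over a contour that can be chosen at mesoscopic imaginary part $|\Im w|$ (because $\phi_{\mathrm{far}}$ is analytic in a neighbourhood of the real axis containing the support of $\bar\partial \tilde\phi_{\mathrm{far}}$ after pushing off from $nz$), and at such scales the entrywise local semicircle law (standard for Wigner matrices under {\condo}) gives $((M_n/\sqrt n) - w)^{-1}_{pq} = \delta_{pq} m_{sc}(w) + O(n^{-c''})$ with overwhelming probability. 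Integrating against the controlled derivatives of $\tilde\phi_{\mathrm{far}}$ yields the claim with $r^{\mathrm{far}}$ equal to the same integral with the semicircle deterministic counterpart inserted.

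To conclude, set $\tilde G(y) := G(y + r^{\mathrm{far}})$, which inherits $|\nabla^j \tilde G| = O(1)$ from $G$. With overwhelming probability, the mean value theorem gives
\[
\bigl|G(R_{pq}(M_n)) - \tilde G(R^{\mathrm{near}}_{pq}(M_n))\bigr| = O(n^{-c''}),
\]
and likewise for $M'_n$. Taking expectations and invoking the near-part bound above with $\tilde G$ in place of $G$ gives the stated inequality. The main obstacle is the far-part analysis: one must verify that the Helffer--Sjöstrand contour can be deformed to a scale where the local semicircle law applies while preserving polynomial accuracy in $n$, and that the resulting leading term is determined only by the semicircle law (hence independent of which of $M_n$, $M'_n$ one uses). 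Once this is done, the rigidity-based truncation, the derivative-bound verification for $G\circ F$, and the final assembly via $\tilde G$ are routine.
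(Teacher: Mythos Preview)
Your overall plan matches the paper's: localize the spectral sum near $nE$ using eigenvalue rigidity, apply Theorem \ref{theorem:main0} to the near piece, and control the rest via the local semicircle law. But there is a genuine gap in the near part. The function $F(x,y)=\sum_a \chi((x_a-nE)/L)\,y_a/(x_a-nz)$ does \emph{not} satisfy the derivative bound \eqref{G-deriv} that Theorem \ref{theorem:main0} requires: that bound must hold for \emph{all} $(x,y)\in\R^k\times\C^k$, whereas your $F$ (and hence $G\circ F$) has derivatives blowing up as $x_a\to nE$ once $n\eta$ is small, which is precisely the regime of interest. The level-repulsion hypothesis \eqref{mo} is a statement about where the random inputs $\lambda_{i_a}(A_n)$ land, not a property of the function $F$; it cannot be invoked to verify \eqref{G-deriv}. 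The paper fixes this by inserting a second cutoff $1-\chi(n^{c_1}(x-nz))$ that explicitly kills $F$ near the singularity, yielding a function $\tilde F$ with $|\nabla^j\tilde F|=O(n^{O(c_1)})$ globally; only \emph{then} is \eqref{mo} used, to say that $\tilde F(\lambda_i)=F(\lambda_i)$ with high probability. Without this truncation step Theorem \ref{theorem:main0} is not applicable. (A related point: your index set $I$ is random, but Theorem \ref{theorem:main0} takes fixed indices $i_1,\dots,i_k$. Rigidity actually supplies a \emph{deterministic} interval $[i_-,i_+]$ capturing all relevant eigenvalues with overwhelming probability, and that is what one should sum over.)

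Your far-part treatment is also different from, and more laborious than, the paper's. Rather than showing $n[\phi_{\mathrm{far}}(A_n)]_{pq}$ concentrates via Helffer--Sj\"ostrand (which is delicate here: $\phi_{\mathrm{far}}$ is not compactly supported and decays only like $1/\lambda$, so one must first cut off at the spectral edge and then push the almost-analytic extension down to mesoscopic scales with uniform control), the paper simply subtracts the resolvent at a comparison point $z_0=E+in^{-1+c_2/2}$. That single term is handled directly by the entrywise local semicircle law, and the resulting kernel $F(x)=1/(x-nz)-1/(x-nz_0)$ decays \emph{quadratically}, so the far sum $\sum_{i\notin[i_-,i_+]}F(\lambda_i)\,nP_{i,p,q}$ is not merely deterministic but in fact $o(1)$ with overwhelming probability. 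This sidesteps entirely the contour-deformation issue you flag as your ``main obstacle.''
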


We isolate the $z=0$ case of this theorem as a corollary:

\begin{corollary}[Four moment theorem for the inverse matrix]\label{inverse} 
Under the conditions of Theorem \ref{green}, we have 
$$ \E G\left(\left(\frac{1}{\sqrt{n}} M_n\right)^{-1}_{pq} \right) - \E G\left( \left(\frac{1}{\sqrt{n}} M'_n\right)^{-1}_{pq} \right) = O(n^{-c_0}).$$
\end{corollary}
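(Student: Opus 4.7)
The plan is to deduce Corollary \ref{inverse} as the specialization of Theorem \ref{green} to $z = 0$. The only subtlety is that Theorem \ref{green} is stated with $\eta > 0$, so a short approximation argument is needed to push to the real axis. I would apply Theorem \ref{green} at $z = i\eta$ with $\eta = n^{-A}$ for a large constant $A$, and then bound the replacement error when sending $\eta \to 0$.

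For the first step, the hypotheses \eqref{mo}, \eqref{no} at $z = 0$ immediately transfer to $z = i\eta$, since $\lambda_i(A_n)$ is real and therefore $|\lambda_i(A_n) - in\eta|^2 = \lambda_i(A_n)^2 + n^2\eta^2 \geq n^{-2c}$ on the good event. Theorem \ref{green} then gives
$$\E G\left( \left(\tfrac{1}{\sqrt n} M_n - i\eta I\right)^{-1}_{pq} \right) - \E G\left( \left(\tfrac{1}{\sqrt n} M'_n - i\eta I\right)^{-1}_{pq} \right) = O(n^{-c_0})$$
uniformly in such $\eta$.

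For the second step, using the spectral expansion \eqref{green-eq} one has
$$\left(\tfrac{1}{\sqrt n} M_n - i\eta I\right)^{-1}_{pq} - \left(\tfrac{1}{\sqrt n} M_n\right)^{-1}_{pq} = \sum_{i=1}^n \frac{i n \eta \cdot n P_{i,p,q}(A_n)}{\lambda_i(A_n)\,(\lambda_i(A_n) - i n \eta)}.$$
On the level-repulsion event, each denominator is at least $n^{-2c}$, while Cauchy--Schwarz applied to $|P_{i,p,q}(A_n)| = |u_{i,p}| |u_{i,q}|$ together with orthonormality $\sum_i |u_{i,p}|^2 = \sum_i |u_{i,q}|^2 = 1$ gives $\sum_i |P_{i,p,q}(A_n)| \leq 1$. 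Hence the replacement error is $O(n^{2+2c} \eta)$ on the good event, which is $O(n^{-c_0-1})$ for $A$ sufficiently large. Since $G$ is $O(1)$-Lipschitz and bounded (using $\nabla^j G = O(1)$ for $j = 0, 1$), combining with the $O(n^{-c})$ probability bound on the bad event yields the desired $O(n^{-c_0})$ estimate for the inverse matrix, after running the same estimate for $M_n'$.

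The main (and rather mild) obstacle is checking that the $O(n^{-c_0})$ bound in Theorem \ref{green} is uniform as $\eta \to 0^+$. This is transparent from the structure of the argument behind Theorem \ref{green}: its constants come from Theorem \ref{theorem:main0} and depend only on $c_0$, the smoothness of $G$, and the level-repulsion exponents, none of which degrades as $\eta$ shrinks. Note also that the normalization ambiguity of the eigenvectors in \eqref{green-eq} is a non-issue, since $P_{i,p,q}(A_n)$ and the resolvent entries are both normalization-invariant.
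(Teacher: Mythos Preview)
Your proposal is correct, and essentially matches the paper's approach: the paper simply declares Corollary \ref{inverse} to be the $z=0$ specialization of Theorem \ref{green} without further comment. Your limiting argument $\eta = n^{-A} \to 0$ is a careful way to handle the stated restriction $\eta > 0$, but in fact the proof of Theorem \ref{green} in Section \ref{green-sec} works verbatim at $\eta = 0$ (the level-repulsion hypothesis \eqref{mo} is precisely what keeps $F(\lambda_i(A_n))$ finite there), so the extra step is not strictly needed.
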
 

We prove Theorem \ref{green} in Section \ref{green-sec}; it is established by combining Theorem \ref{theorem:main0} with an eigenvalue rigidity result from \cite{EYY-rigid} and a local semicircle law from \cite{EYY1}.  This result generalizes a similar four moment theorem from (\cite{EYY1} ( \cite[Theorem 2.3]{EYY1}). Its main strengths as compared against that result are that $\eta$ is allowed to go all the way to zero. In particular, one can take $z$ to be zero, thus giving control of the coefficients of the inverse matrix $M_n^{-1}$. 
On the other hand, the result in \cite[Theorem 2.3]{EYY1} does not require the hypothesis \eqref{mo}, \eqref{no}, and allows the entries in $M_n$ (or $M'_n$) to have different variances, and the bounds are slightly sharper. 

\begin{remark} The same proof allows one to control the joint distribution of $k$ coefficients of several resolvents with $k = O(n^\delta)$ for some sufficiently small $\delta > 0$, assuming a level repulsion estimate at each energy $z$. \end{remark}

 The hypotheses \eqref{mo}, \eqref{no} are natural, as when these claims fail one would expect the resolvent to be unusually large.  However, in practice such hypotheses are in fact automatic and can thus be omitted.  For instance, we have

\begin{proposition}[Sufficient conditions for level repulsion]  Let $M_n$ be a Wigner matrix whose entries $\xi_{ij}$ are identically distributed in the off-diagonal $1 \leq i < j \leq n$, and also identically distributed on the diagonal $1 \leq i=j \leq n$.  Let $z \in \C$.
\begin{enumerate}
 \item[(i)] If the real and imaginary parts of the off-diagonal coefficients of $M_n$ are iid and supported on at least three points, and $M_n$ obeys Condition \condone for a sufficiently large $C_0$, then \eqref{mo} holds.
 \item[(ii)] If $z=0$ and $M_n$ obeys Condition \condone for $C_0=4$, then \eqref{mo} holds.
\end{enumerate}
\end{proposition}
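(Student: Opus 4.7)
Since the eigenvalues $\lambda_i(\sqrt{n}M_n)$ are real, we have
$$|\lambda_i(\sqrt{n}M_n) - nz| \geq |\lambda_i(\sqrt{n}M_n) - n\,\Re z|,$$
so it suffices to prove \eqref{mo} for real $z = E$. The plan is then to reduce each part to a known estimate: part (i) to a bulk/edge Wegner-type estimate, and part (ii) to a smallest singular value bound.

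For part (i), split according to whether $E$ lies inside or outside the bulk of the semicircle. If $|E| \geq 2+\delta$ for some fixed $\delta>0$ independent of $n$, then the standard moment-method bound on the operator norm of a Wigner matrix (using Condition \condone\ for sufficiently large $C_0$) gives $|\lambda_i(\sqrt{n}M_n)| \leq (2+o(1))n$ for every $i$ with overwhelming probability, so every such eigenvalue lies at distance $\Omega(n) \gg n^{-c}$ from $nE$. If $|E| \leq 2+\delta$, it suffices to establish a Wegner-type estimate of the form
$$\E\bigl[\,\#\{i: |\lambda_i(\sqrt{n}M_n) - nE| \leq n^{-c}\}\,\bigr] \leq n^{-c'}$$
for some $c'=c'(c)>0$, after which \eqref{mo} follows by Markov's inequality. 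In the semicircle-scale normalization this is a level-counting statement at a window of length $n^{-c-1}$ around $E$, well below the typical level spacing $\sim n^{-1}$; the required bound is a local level-repulsion / Wegner estimate for Wigner matrices, available for entry distributions that admit a smoothing or swapping step on each individual coefficient. The three-point-support hypothesis is precisely what enables such a step (two points are rigid under coefficient swaps; three points provide the extra free parameter over which to average), and together with Condition \condone\ this yields the required estimate.

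For part (ii), the assertion $\inf_i |\lambda_i(\sqrt{n}M_n)| \geq n^{-c}$ with high probability (for every $c>0$) is equivalent to a bound of the form ``the smallest singular value of $M_n$ is at least $n^{-1/2-c}$ with high probability''. This is precisely a smallest singular value / invertibility estimate for real symmetric Wigner matrices with bounded fourth moment; bounds of this type are known, yielding, for each fixed $c>0$, probability at least $1-O(n^{-c''(c)})$ for some $c''(c)>0$, which is exactly \eqref{mo} at $z=0$.

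The principal technical obstacle is establishing the Wegner estimate at scale $n^{-c-1}$ in part (i) for discrete atom distributions: this scale lies far below the $n^{-1+\epsilon}$ range of the standard local semicircle law, and the estimate rests on a Lindeberg / swapping argument that genuinely needs each entry's atom distribution to contain at least three distinct values in order to produce any useful smoothing or cancellation.
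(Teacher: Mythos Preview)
Your proposal is correct and matches the paper's approach: the paper's own proof is simply a pair of citations, referring part (i) to the level repulsion estimate in \cite{mehta} (with \cite{ESY3}, \cite{maltsev} as precursors under smoothness hypotheses) and part (ii) to Vershynin's smallest singular value bound for symmetric random matrices \cite{vershynin}. Your write-up is in fact more explicit than the paper's, in that you spell out the reduction to real $z$, the operator norm bound for energies outside the bulk, and the identification of part (ii) with a smallest singular value estimate; but at the level of actual content both arguments amount to invoking the same external results.
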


\begin{proof}   For (i), see \cite{mehta}; an earlier result assuming smoothness and decay on the coefficients (but without the requirement of iid real and imaginary parts) was established in \cite{ESY3} (see also \cite{maltsev} for a more refined result).  The claim (ii) was recently established (by a rather different method) in \cite{vershynin} 
 \end{proof} 

It is in fact likely that \eqref{mo} and \eqref{no} in fact always hold whenever Condition \condone is satisfied for a sufficiently large $C_0$, but we do not attempt to establish this fact here.

As another application, we can  use our new results to obtain central limit theorems concerning eigenvectors.  Here is a sample result. 

\begin{theorem}  \label{theorem:CLT2} Let $M_n$ be a random symmetric matrix obeying hypothesis {\condone} for a sufficiently large constant $C_0$, 
 which matches the Gaussian Orthogonal Ensemble(GOE) to fourth order.  Assume furthermore that the atom distributions of $M_n$ are symmetric (i.e. $\xi_{ij} \equiv -\xi_{ij}$ for all $1 \leq i,j \leq n$), and identically distributed (or more precisely, that the $\xi_{ij}$ for $i>j$ are identically distributed, and the $\xi_{ij}$ for $i=j$ are also identically distributed).  Let $i = i_n$ be an index (or more precisely, a sequence of indices) between $1$ and $n$, and let $a= a_n \in S^{n-1}$ be a unit vector in $\R^n$ (or more precisely, a sequence of unit vectors).  Assume either that
\begin{itemize}
\item[(a)] $u_i(M_n)$ is normalized using the procedure (iii), and $a \cdot e_1 = o(1)$; or
\item[(b)] $u_i(M_n)$ is normalized using the procedure (iv).
\end{itemize}
Then $\sqrt n u_{i}(M_n) \cdot a$ tends to $N(0,1)_\R$ in distribution as $n \to \infty$. 
\end{theorem}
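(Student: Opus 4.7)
The plan is to show that the characteristic function
$$
\phi_n(t) := \E\exp\bigl(\sqrt{-1}\,t\sqrt{n}\,u_i(M_n)\cdot a\bigr)
$$
tends to $e^{-t^2/2}$ for every fixed $t$. Under case~(b), the symmetric-atom plus iid-entries hypotheses make the law of $M_n$ invariant under $M_n \mapsto DM_nD$ for any diagonal sign matrix $D$; combined with the random normalization (iv) this forces the signs $\sgn(u_{i,p}(M_n))$ to be iid uniform in $\{\pm 1\}$ and independent of the magnitudes $(|u_{i,p}(M_n)|)_{p=1}^n$. Integrating the signs out gives
$$
\phi_n(t) = \E\prod_{p=1}^n \cos\bigl(t\sqrt{n}\,a_p\,|u_{i,p}(M_n)|\bigr),
$$
which depends only on the joint law of the diagonal projection entries $nP_{i,p,p}(M_n) = n|u_{i,p}(M_n)|^2$, precisely the object Theorem~\ref{theorem:main0} controls.

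Fix a small $\delta' > 0$ (depending on the constants of Theorem~\ref{theorem:main0}) and split $\{1,\ldots,n\} = S \sqcup S^c$ with $S := \{p : |a_p| \geq n^{-\delta'}\}$, so $|S| \leq n^{2\delta'}$. On the overwhelmingly-probable event that $\|u_i(M_n)\|_\infty = n^{-1/2+o(1)}$ (an eigenvector delocalization bound available under \condone\ with $C_0$ large via the local semicircle law), for $p \notin S$ the arguments $t\sqrt n\,a_p|u_{i,p}|$ are $O(n^{-\delta'+o(1)})$, so Taylor-expanding $\log\cos$ yields
$$
\prod_{p\notin S}\cos(t\sqrt n\,a_p|u_{i,p}|) = \exp\bigl(-\tfrac{t^2}{2}V\bigr)(1+o(1)), \qquad V := n\!\sum_{p\notin S} a_p^2\,P_{i,p,p}(M_n).
$$
By permutation symmetry $\E P_{i,p,p}(M_n) = 1/n$, hence $\E V = 1-\|a_S\|^2$. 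Applying Theorem~\ref{theorem:main0} one pair $(p,q)\in(S^c)^2$ at a time (with a suitable truncation of $G(x,y)=xy$ to cope with boundedness, justified by delocalization) transfers each covariance $\operatorname{Cov}(nP_{i,p,p},nP_{i,q,q})$ from $M_n$ to GOE with $O(n^{-c_0})$ error; a direct computation for GOE gives $\operatorname{Cov}(nP_{i,p,p},nP_{i,q,q}) = 2\delta_{pq}+O(1/n)$, so $\Var V = O(\|a_{S^c}\|_\infty^2) + O(n^{-c_0}) = o(1)$ and $V \to 1-\|a_S\|^2$ in probability.

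For the product over $S$, note that $\cos(ta_p\sqrt x)$ is smooth in $x\in\R$ (its Taylor series in $x$ contains only nonnegative integer powers) with derivatives bounded by polynomials in $|t|$; hence $G\bigl((x_p)_{p\in S}\bigr) := \prod_{p\in S}\cos(ta_p\sqrt{x_p})$ is smooth with order-$\leq 5$ derivative bounds of size $|S|^5\cdot O_t(1) = n^{10\delta'}$, fitting the hypotheses of Theorem~\ref{theorem:main0} for $\delta'$ small. Applying that theorem swaps $M_n$ for a GOE matrix $M'_n$ with additive error $O(n^{-c_0})$; Corollary~\ref{goegue} then says that $(\sqrt n\,u_{i,p}(M'_n))_{p\in S}$ converges jointly to iid $N(0,1)_\R$ variables $(g_p)$, so
$$
\E\prod_{p\in S}\cos\bigl(ta_p\sqrt{nP_{i,p,p}(M'_n)}\bigr) \longrightarrow \prod_{p\in S}\E\cos(ta_p g_p) = e^{-t^2\|a_S\|^2/2}.
$$
Combining with the concentration of $V$ to factor the expectation gives $\phi_n(t) \to e^{-t^2\|a_S\|^2/2}\cdot e^{-t^2(1-\|a_S\|^2)/2} = e^{-t^2/2}$, proving case~(b).

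Case~(a) reduces to case~(b) via the identity $u_i^{(\mathrm{iii})} = \sgn(u_{i,1}^{(\mathrm{iv})})\,u_i^{(\mathrm{iv})}$: writing $a = a_1 e_1 + a'$ gives $\sqrt n\,u_i^{(\mathrm{iii})}\cdot a = a_1\sqrt n|u_{i,1}^{(\mathrm{iv})}| + \sgn(u_{i,1}^{(\mathrm{iv})})\sqrt n\,u_i^{(\mathrm{iv})}\cdot a'$. The first summand vanishes in probability since $a_1 = o(1)$ and $\sqrt n|u_{i,1}^{(\mathrm{iv})}|$ is tight; and the sign-magnitude decomposition of $u_i^{(\mathrm{iv})}$ makes $\sgn(u_{i,1}^{(\mathrm{iv})})$ \emph{exactly} independent of $u_i^{(\mathrm{iv})}\cdot a'$ (which only involves the signs $\eta_p$ for $p > 1$), so the second summand is equidistributed with $\sqrt n\,u_i^{(\mathrm{iv})}\cdot a'$, which converges to $N(0,1)_\R$ by case~(b) applied to the unit vector $a'/\|a'\|$ (using $\|a'\|\to 1$). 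The main technical obstacle is the quantitative concentration of $V$ uniformly in the (possibly spread-out) vector $a$; the strategy above packages this into second-moment estimates for individual pairs of diagonal projection entries, each accessible via Theorem~\ref{theorem:main0} after a truncation argument based on delocalization.
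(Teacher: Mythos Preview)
Your argument is essentially correct and takes a genuinely different route from the paper's. The paper isolates an abstract central limit theorem (Proposition~\ref{propo1}) for random unit vectors that are exchangeable, sign-symmetric, and have finite-dimensional marginals converging to iid Gaussians; the proof of that proposition proceeds by truncating the coordinates and applying the \emph{moment method}, the key combinatorial input being that $\sum_{*} a_{i_1}\cdots a_{i_k} = O_k(1)$ when the sum ranges over tuples in which every index occurs an even number of times. Theorem~\ref{theorem:CLT2} then follows by checking the three hypotheses of Proposition~\ref{propo1} for the eigenvector $u_i(M_n)$, the marginal convergence coming from Theorem~\ref{gg-main}.

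Your approach instead works with the characteristic function, exploits sign-symmetry to reduce to a product of cosines, and then splits the coordinate set according to the size of $|a_p|$. For the light coordinates you Taylor-expand and reduce to concentration of the weighted quadratic form $V = n\sum_{p\notin S} a_p^2 P_{i,p,p}$, whose variance you control by transferring individual covariances $\operatorname{Cov}(nP_{i,p,p},nP_{i,q,q})$ to GOE via Theorem~\ref{theorem:main0}; for the heavy coordinates (at most $n^{2\delta'}$ of them) you apply Theorem~\ref{theorem:main0} directly to the product of cosines. Two remarks that would tighten the write-up: first, because of exchangeability the off-diagonal covariance takes a single value $B$ determined by the diagonal value $A$ through the constraint $A+(n-1)B=n$, so a single application of the Four Moment Theorem (to pin down $A$) already suffices for $\Var V = o(1)$; second, the derivative bounds on $x\mapsto\cos(c\sqrt{x})$ are indeed $O_c(1)$ uniformly on $[0,\infty)$ (the function and its $x$-derivatives actually decay in $x$), but one should insert a cutoff near $x=-1$ so that the hypotheses of Theorem~\ref{theorem:main0} hold on all of $\R$. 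Your reduction of case~(a) to case~(b) via the sign--magnitude decomposition of $u_i^{(\mathrm{iv})}$ matches the paper's treatment. Overall, the paper's route is cleaner in that it packages the probabilistic content into a stand-alone CLT for exchangeable sign-symmetric vectors, whereas your route stays closer to the random-matrix machinery and uses Theorem~\ref{theorem:main0} more directly.
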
 

Note that if one normalizes $u_i(M_n)$ using procedure (iii), then $u_i(M_n) \cdot e_1 = u_{i,1}(M_n)$ will be non-negative (and thus cannot converge in distribution to $N(0,1)_\R$), which helps explain the presence of the condition $a \cdot e_1$ in condition (a).

As an example to illustrate Theorem \ref{theorem:CLT2}, we can take $a = a_n := \frac{1}{\sqrt n}(1,\ldots, 1) \in S^{n-1}$, and $i := \lfloor n/2\rfloor$.  Then Theorem \ref{theorem:CLT2} asserts that 
the sum of the entries of the middle eigenvector $u_{\lfloor n/2\rfloor}(M_n)$ (using either normalization (iii) or normalization (iv)) is gaussian in the limit.

We prove Theorem \ref{theorem:CLT2} in Section \ref{clt2-sec} as a consequence of Corollary \ref{goegue} and a general central limit theorem on averages of approximately independent symmetric random variables (Proposition \ref{propo1}) which may be of independent interest.  It should be possible to extend this result to more general ensembles $M_n$ (and to obtain the analogous results for ensembles that match GUE rather than GOE), and to obtain central limit theorems for the joint distribution of several statistics of the form $\sqrt{n} u_i(M_n) \cdot a_n$, but we do not pursue this matter here.

\begin{remark} The four moment theorem can be extended to handle the singular values of iid covariance matrices, instead of the eigenvalues of Wigner matrices; see \cite{TVscv}.  It is possible to use that extension to establish an analogue of Theorem \ref{theorem:main0} for the singular values and singular vectors of such matrices, and an analogue of Theorem \ref{green} for the inverses of such matrices (which, as is well known, can be expressed in terms of the singular value decomposition of the matrix).  We omit the details.
\end{remark}

\section{Proof of Theorem \ref{gg-main}}\label{gg-main-sec}

 We establish the claim in the GOE case only, as the GUE case is  similar.  We shall also establish the claim just for the normalization (iii), as the normalization (iv) can be treated similarly (or deduced directly from the results for (iii)).

Let $C_0$ be as in Theorem \ref{theorem:main0}, let $\delta > 0$ be sufficiently small, and let $M_n$, $u_1(M_n),\ldots,u_n(M_n)$, $\xi_{i,j}$, $k$, $i_1,\ldots,i_k,p_1,\ldots,p_k$, and $F$ be as in that theorem.  Let $M'_n$ be drawn from GOE, and write $A_n := \sqrt{n} M_n$ and $A'_n := \sqrt{n} M'_n$.  We initially assume that $k = O(n^\delta)$.  By adding a dummy index and relabeling if necessary, we may assume without loss of generality that $p_1=1$.

The normalization given by viewpoint (iii) is unstable when $u_{i,1}$ is close to zero, and so we will need to eliminate this possibility first.  To this end, let $F_0: \C^k \to \R$ be a function of the form
$$ F_0(x_1,\ldots,x_k) := \prod_{a=1}^k (1 - \chi(n^{10\delta} x_a))$$
where $\chi: \C \to [0,1]$ is a smooth cutoff which equals one on the ball $B(0,1)$ but vanishes outside of $B(0,2)$.  Observe from the union bound that 
$$ \E F_0\left( ( |\xi_{i_a,1}|^2 )_{1 \leq a \leq k} \right) = 1 - o(1) $$

and thus by Corollary \ref{goegue}, we have an analogous estimate for the quantities $n P_{i_a,1,1}(A'_n) = (\sqrt{n} u_{i_a,1}(A'_n))^2$:
$$ \E F_0\left( ( n P_{i_a,1,1}(A'_n) )_{1 \leq a \leq k} \right) = 1 - o(1).$$

Applying Theorem \ref{theorem:main0} (assuming that $\delta$ is sufficiently small depending on $c_0$, so that the losses of $n^{O(\delta)}$ coming from bounding the derivatives of $F_0$ can be absorbed into the $n^{-c_0}$ factor) we conclude that
\begin{equation}\label{fon}
 \E F_0\left( ( n P_{i_a,1,1}(A_n) )_{1 \leq a \leq k} \right) = 1 - o(1),
\end{equation}

and thus we have
$$ \inf_{1 \leq a \leq k} |n P_{i_a,1,1}(A_n)| \geq n^{-10\delta} $$
or equivalently that
\begin{equation}\label{n5}
 \inf_{1 \leq a \leq k} \sqrt{n} u_{i_a,1}(A_n) \geq n^{-5\delta}
 \end{equation}
asymptotically almost surely.

Now we can prove part (i) of Theorem \ref{gg-main}.  From \eqref{fon} one has
$$ \E \left|1 - F_0\left( ( n P_{i_a,1,1}(A_n) \right)_{1 \leq a \leq k} )\right| = o(1)$$
and thus (by the boundedness of $F$)
$$ \E \left(1 - F_0\left( ( n P_{i_a,1,1}(A_n) )_{1 \leq a \leq k} \right)\right) F\left( (\sqrt{n} u_{i_a,p_b}(A_n))_{1 \leq a,b \leq k} \right) = o(1).$$

A similar argument gives
$$ \E \left(1 - F_0\left( ( |\xi_{i_a,1}|^2 )_{1 \leq a \leq k} \right)\right) F\left( ( \xi_{i_a,p_b} )_{1 \leq a,b \leq m} \right) = o(1).$$
Thus, to prove \eqref{sil}, it suffices by the triangle inequality to show that
$$
\E \tilde F\left( (\sqrt{n} u_{i_a,p_b}(A_n))_{1 \leq a,b \leq k} \right) - \E \tilde F\left( (\xi_{i_a,p_b})_{1 \leq a,b \leq k} \right) = o(1)$$
where $\tilde F: \C^{k^2} \to \C$ is the function
$$ \tilde F\left( (x_{a,b})_{1 \leq a,b \leq k} \right) =  F_0\left( (|x_{a,1}|^2)_{1 \leq a \leq k} \right) F\left( (x_{a,b})_{1 \leq a,b \leq k} \right).$$

From Corollary \ref{goegue} we have
$$
\E \tilde F\left( (\sqrt{n} u_{i_a,p_b}(A'_n))_{1 \leq a,b \leq k} \right) - \E \tilde F\left( (\xi_{i_a,p_b})_{1 \leq a,b \leq k} \right) = o(1)$$
so it suffices to show that
$$
\E \tilde F\left( (\sqrt{n} u_{i_a,p_b}(A'_n))_{1 \leq a,b \leq k} \right) - \tilde F\left( (\sqrt{n} u_{i_a,p_b}(A_n))_{1 \leq a,b \leq k}  \right) = o(1).$$
But observe that in the support of $\tilde F$, $\sqrt{n} u_{i_a,1}$ is non-zero (and indeed, we have \eqref{n5}), and the components $\sqrt{n} u_{i_a,j_b}$ can then be recovered from the projection coefficients $n P_{i_a,p_b,1} = n u_{i_a,p_b} u_{i_a,1}$ by the formula
$$ \sqrt{n} u_{i_a,p_b} = \frac{n P_{i_a,p_b,1}}{\sqrt{n P_{i_a,1,1}}}.$$
Thus we can write
$$ \tilde F\left( (\sqrt{n} u_{i_a,p_b}(A'_n))_{1 \leq a,b \leq k} \right) = G\left( (n P_{i_a,p_b,1}(A'_n))_{1 \leq a,b \leq k} \right)$$
where
$$ G\left( (x_{a,b})_{1 \leq a,b \leq k} \right) := \tilde F\left( \left( \frac{x_{a,b}}{\sqrt{x_{a,1}}} \right)_{1 \leq a,b \leq k} \right)$$
with the understanding that the right-hand side vanishes when any of the $x_{a,1}$ vanish.  Similarly for $A_n$. From construction we can easily verify that
$$ |\nabla^j G(x)| = O(n^{O(\delta)} )$$
for all $0 \leq j \leq 5$.  The claim (i) now follows from Theorem \ref{theorem:main0} (assuming $\delta$ sufficiently small depending on $c_0$).

Finally, we prove Claim (ii) of Theorem \ref{gg-main}.  Assume that $k = O(1)$.  Let $\eps = \eps(n) = o(1) > 0$ be a slowly decaying function of $n$ to be chosen later.  Observe that asymptotically almost surely, one has
$$ \sum_{a=1}^k \sum_{b=1}^k |\xi_{i_a,p_b}|^2 = O( 1/\eps^2 ),$$
and thus by (i), we conclude that asymptotically almost surely, we also have
$$ \sum_{a=1}^k \sum_{b=1}^k |\sqrt{n} u_{i_a,p_b}|^2 = O( 1/\eps^2 ),$$
thus the vector $\vec u := (\sqrt{n} u_{i_a,p_b})_{1 \leq a,b \leq k}$ asymptotically almost surely lies in a ball $B$ of radius $O(1/\eps)$ in $\C^{k^2}$.  Since $k=O(1)$, we may thus cover this ball by $O(\eps^{-O(1)})$ disjoint half-open cubes $Q_1,\ldots,Q_m$ of sidelength $\eps$; thus
$$ \P\left( \vec u \in \bigcup_{i=1}^m Q_i \right) = 1-o(1).$$
Write $\vec \xi := (\xi_{i_a,p_b})_{1 \leq a,b \leq k}$.  Applying Claim (i) (approximating the indicator functions $1_{Q_i}$ from above and below by smooth functions) and using the continuous nature of $\vec \xi$, we see that
$$ \sum_{i=1}^m |\P( \vec u \in Q_i ) - \P( \vec \xi \in Q_i )| = o(1)$$
if $\eps$ is sufficiently slowly decaying in $n$.  From this, we see that we may couple $\vec u$ and $\vec \xi$ together in such a fashion that with probability $1-o(1)$, $\vec u$ and $\vec \xi$ lie in the same cube $Q_i$, which in particular implies that $\vec u - \vec \xi = o(1)$.  The claim follows.

\begin{remark}  By using the polynomial decay rate $O(n^{-c})$ in the bounds appearing in the proof of Claim (i), it is possible to relax the condition $k=O(1)$ in Claim (ii) to $k = o(\log^{1/2} n)$, as this allows us to absorb factors of the form $O(\exp(O(k^2)))$ arising from the high dimension of the domain $\C^{k^2}$ into the $O(n^{-c})$ factors.  We omit the details.
\end{remark}

\section{Proof of four moment theorem}

In this section we prove Theorem \ref{theorem:main0}.  We will follow closely the arguments from \cite{TVuniv}, \cite{TVedge}, \cite{TVscv}.

\subsection{Heuristic discussion}

Let us first review the general strategy from \cite{TVuniv} for handling the eigenvalues.   As in \cite{TVuniv}, we introduce the normalised matrices
$$ A_n := \sqrt{n} M_n, A'_n =\sqrt n M'_n $$
(whose mean eigenvalue spacing is comparable to $1$).
  For sake of exposition let us restrict attention to the case $k=1$, thus we wish to show that the expectation $\E G(\lambda_i(A_n))$ of the random variable $G(\lambda_i(A_n))$ only changes by $O( n^{-c_0})$ if one replaces $A_n$ with another random matrix $A'_n$ with moments matching up to fourth order off the diagonal (and up to second order on the diagonal).  To further simplify the exposition, let us suppose that the coefficients $\zeta_{pq}$ of $A_n$ (or $A'_n$) are real-valued rather than complex-valued.

At present, $A'_n$ differs from $A_n$ in all $n^2$ components.  But suppose we make a much milder change to $A_n$, namely replacing a single entry $\sqrt{n} \zeta_{pq}$ of $A_n$ with its counterpart $\sqrt{n} \zeta'_{pq}$ for some $1 \leq p \leq q \leq n$.  If $p \neq q$, one also needs to  replace the companion entry $\sqrt{n} \zeta_{qp} = \sqrt{n} \overline{\zeta}_{pq}$ with $\sqrt{n} \zeta'_{qp} = \sqrt{n} \overline{\zeta}'_{pq}$, to maintain the Hermitian property.  This creates another random matrix $\tilde A_n$ which differs from $A_n$ in at most two entries.  Note that $\tilde A_n$ continues to obey Condition \condone, and has matching moments with either $A_n$ or $A'_n$ up to fourth order off the diagonal, and up to second order on the diagonal.

Suppose that one could show that $\E G( \lambda_i(A_n) )$ differed from $\E G( \lambda_i(\tilde A_n) )$ by at most $n^{-2-c_0}$ when $p \neq q$ and by at most $n^{-1-c_0}$ when $p=q$.  Then, by applying this swapping procedure once for each pair $1 \leq p \leq q \leq n$ and using the triangle inequality, one would obtain the desired bound $|\E G(\lambda_i(A_n) ) - \E G( \lambda_i(A'_n) ) |= O( n^{-c_0} )$.

Now let us see why we would expect $\E G( \lambda_i(A_n) )$ to differ from $\E G( \lambda_i(\tilde A_n) )$ by such a small amount.  For sake of concreteness let us restrict attention to the off-diagonal case $p \neq q$, where we have four matching moments; the diagonal case $p=q$ is similar but one only assumes two matching moments, which is ultimately responsible for the $n^{-1-c_0}$ error rather than $n^{-2-c_0}$.

Let us freeze (or condition on) all the entries of $A_n$ except for the $pq$ and $qp$ entries.  For any complex number $z$, let $A(z)$ denote the matrix which equals $A_n$ except at the $pq$, $qp$, entries, where it equals $z$ and $\overline{z}$ respectively.  (Actually, with our hypotheses, we only need to consider real-valued $z$.)  Thus it would suffice to show that
\begin{equation}\label{eff}
\E F( \sqrt{n} \zeta_{pq} ) = \E F( \sqrt{n} \zeta'_{pq} ) + O( n^{-2-c_0} )
\end{equation}
for all (or at least most) choices of the frozen entries of $A_n$, where $F(z) := G( \lambda_i( A(z) ) )$.  (A standard argument allows us to restrict attention to values of $z$ of size $O( n^{1/2+O(1/C_0)} )$.)

Suppose we could show the derivative estimates
\begin{equation}\label{fzk}
\frac{d^l}{dz^l} F(z) = O( n^{-l+O(c_0)+O(1/C_0)+o(1)} )
\end{equation}
for $l=1,2,3,4,5$.   Then by Taylor's theorem with remainder, we would have
$$ F(z) = F(0) + F'(0) z + \ldots + \frac{1}{4!} F^{(4)}(0) z^4 + O( n^{-5+O(c_0)+O(1/C_0)+o(1)} |z|^5 )$$
and so in particular (using the hypothesis $z = O(n^{1/2+O(1/C_0)})$)
$$ F( \sqrt{n} \zeta_{pq} ) = F(0) + F'(0) \sqrt{n} \zeta_{pq} + \ldots + \frac{1}{4!} F^{(4)}(0) \sqrt{n}^4 \zeta_{pq}^4 + O( n^{-5/2+O(c_0)+O(1/C_0)+o(1)} )$$
and similarly for $F( \sqrt{n} \zeta'_{pq} )$.  Since $n^{-5/2+O(c_0)+O(1/C_0)+o(1)} = O( n^{-2-c_0} )$ for $C_0$ and $n$ large enough and $c_0$ small enough, we thus obtain the claim \eqref{eff} thanks to the hypothesis that the first four moments of $\zeta_{pq}$ and $\zeta'_{pq}$ match.  (Note how this argument barely fails if only three moments are assumed to match, though it is possible that some refinement of this argument might still succeed by exploiting further cancellations in the fourth order term $\frac{1}{4!} F^{(4)}(0) \sqrt{n}^4 \zeta_{pq}^4$.)

We can use exactly this strategy for eigenvectors, replacing $\lambda_i$ by $P_{i,p,q}$ (say). A key point here is that the derivatives of $F$ is computed based on the basic relation 
$A_n u_i = \lambda_i u_i $ and the chain rule. Thus, in order to bound the derivatives of $F$ (for the four moment theorem for eigenvalues), we needed to 
obtain estimates for the derivatives of both $\lambda_i$ and $u_i$. The same estimates can be used to bound the derivatives of $F$ in the proof of the four moment theorem for 
eigenvectors.  In order to make $k$ as large as $n^{\Omega (1)}$, one needs to follow the proof closely and make 
certain adjustments. 

\subsection{Formal proof}

We now turn to the details. 
The first step is to truncate away the event that an eigenvalue gap is unexpectedly small. Define

$$ Q_i(A_n) := \sum_{j \neq i} \frac{1}{|\lambda_j(A_n) - \lambda_i(A_n)|^2}.$$
This quantity is usually bounded from above:

\begin{lemma}[Bound on $Q$]\label{q-bound}  If $c_0>0$, then one has
$$ Q_{i_a}(A_n) \leq n^{c_0} $$
with high probability for all $1 \leq a \leq k$.
\end{lemma}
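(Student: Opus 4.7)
The plan is to establish the bound by splitting the sum into a \emph{near-diagonal} piece over indices $j$ close to $i_a$ and a \emph{far-diagonal} piece over the remaining $j$, controlling each by a different input. Specifically, fix a small parameter $c_0 > 0$ and choose $c_0' = c_0/3$. Write
\[
Q_{i_a}(A_n) = \sum_{0 < |j - i_a| \le n^{c_0'}} \frac{1}{|\lambda_j(A_n) - \lambda_{i_a}(A_n)|^2} + \sum_{|j - i_a| > n^{c_0'}} \frac{1}{|\lambda_j(A_n) - \lambda_{i_a}(A_n)|^2}.
\]

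For the near-diagonal sum, I would apply the Gap Theorem (Theorem \ref{gap}) to each of the consecutive gaps $|\lambda_{j+1}(A_n) - \lambda_j(A_n)|$ with $j$ within distance $O(n^{c_0'})$ of $i_a$, with the gap exponent $c_0'$ and exponent $c_1$ chosen large enough that a union bound over these $O(n^{c_0'})$ choices of $j$ still yields high probability. On this event, every such consecutive gap is at least $n^{-c_0'}$, and telescoping gives $|\lambda_j(A_n) - \lambda_{i_a}(A_n)| \ge |j - i_a|\, n^{-c_0'}$. Hence
\[
\sum_{0 < |j - i_a| \le n^{c_0'}} \frac{1}{|\lambda_j - \lambda_{i_a}|^2} \le n^{2c_0'} \sum_{m=1}^{\infty} \frac{1}{m^2} = O(n^{2c_0/3}).
\]

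For the far-diagonal sum, I would invoke eigenvalue rigidity (for instance from \cite{EYY-rigid}, cited later in the paper, or from the local semicircle results used throughout \cite{TVuniv,TVedge,TVscv}): with overwhelming probability, $|\lambda_j(A_n) - \gamma_j| = n^{o(1)}$, where $\gamma_j$ is the classical location predicted by the semicircle law for $A_n$. Since the semicircle density is bounded on compact subsets of $(-2\sqrt n, 2\sqrt n)$, one has $|\gamma_j - \gamma_{i_a}| \gtrsim |j - i_a|$ in the bulk; near the edge the spacing becomes $|j - i_a|^{2/3}/n^{1/3}$, but this weakening is still summable. Subtracting off the rigidity error (which is negligible once $|j - i_a| > n^{c_0'}$) gives $|\lambda_j - \lambda_{i_a}| \gtrsim n^{-o(1)} |\gamma_j - \gamma_{i_a}|$, and therefore
\[
\sum_{|j - i_a| > n^{c_0'}} \frac{1}{|\lambda_j - \lambda_{i_a}|^2} = n^{o(1)}.
\]
Combining the two estimates yields $Q_{i_a}(A_n) \le n^{c_0}$ with high probability, and a final union bound over the $k \le n^\delta$ indices $a$ (assuming $\delta$ is taken small compared to $c_0$) finishes the claim.

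The main subtlety is bookkeeping of exponents: one must choose $c_0'$ small enough that the Gap Theorem's failure probability $n^{-c_1(c_0')}$ survives the union bound over the $O(n^{c_0'})$ consecutive gaps, while also ensuring the resulting bound $n^{2c_0'}$ on the near-diagonal piece is comfortably below $n^{c_0}$. All of this is standard in \cite{TVuniv,TVedge,TVscv}, and indeed this lemma should be essentially a repackaging of analogous $Q$-estimates proved there; I would cite those results rather than repeat the argument in full.
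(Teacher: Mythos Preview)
Your proposal is correct and matches the paper's approach: the paper's proof simply cites \cite[Lemma~49]{TVuniv}, noting that the extended Gap Theorem (Theorem~\ref{gap}) now covers the edge and Condition~{\condone}, which is exactly the citation strategy you arrive at in your final paragraph. Your near/far splitting with the gap theorem controlling the local piece and rigidity/local semicircle controlling the distant piece is precisely the structure of the argument in \cite{TVuniv}; the only bookkeeping caveat is that the gap exponent fed into Theorem~\ref{gap} and the near/far threshold $c_0'$ need not be tied together (decoupling them cleanly resolves the union-bound issue you flag).
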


\begin{proof}  See \cite[Lemma 49]{TVuniv}.  Strictly speaking, this lemma assumed Condition {\condo} and was restricted to the bulk of the spectrum, but this was solely because at the time of writing of that paper, the gap theorem (Theorem \ref{gap}) was only established in that setting.  Inserting Theorem \ref{gap} as a replacement for \cite[Theorem 19]{TVuniv} in the proof of \cite[Lemma 49]{TVuniv}, we obtain the claim.
\end{proof}

In view of this lemma (and its obvious counterpart for $M'_n$), we can (if $k = O(n^\delta)$ for a sufficiently small $\delta$) deduce the four moment theorem from the following truncated version:

\begin{theorem}[Truncated Four Moment Theorem for eigenvectors]\label{theorem:main1} There is an absolute constant $C_0>0$ such that for every sufficiently small constant $c_1>0$ there is a constant $\delta >0$ such that the following holds. 
 Let $M_n, M'_n$ be two Wigner random matrices satisfying {\condone} with constant $C_0$ that match to order $4$ off the diagonal and to order $2$ on the diagonal.  Let $k \leq n^{\delta}$, and let $G: \R^k \times \R^k \times \C^{k} \to \R$ be a smooth function obeying the derivative bounds
\begin{equation}\label{G-deriv-trunc}
|\nabla^j G(x)| \leq n^{c_1}
\end{equation}
for all $0 \leq j \leq 5$ and $x \in \R^k \times \R^k \times \C^{k}$, and which is supported on the region
$$ \left\{ \left( (x_a)_{1 \leq a \leq k}, (y_a)_{1 \leq a \leq k}, (z_{ab})_{1 \leq a,b \leq k} \right) : |y_a| \leq n^{c_1} \hbox{ for all } 1 \leq a \leq k \right\}.$$
 Then for any $1 \le i_1, i_2, \ldots, i_k \le n$ and $1 \leq p_1,\ldots,p_k,q_1,\ldots,q_k \leq n$, and for $n$ sufficiently large depending on $\eps,\delta,c_0$ (and the constant $C$ in Definition \ref{def:Wignermatrix}) we have
\begin{equation} \label{eqn:approximation}
|\E G( \tilde \Phi(A_n) ) - \E G( \tilde \Phi(A'_n) )| \leq n^{-c_1}
\end{equation}
where for any matrix $A_n$, $\tilde \Phi(A_n) \in \R^k \times \C^{k}$ is the tuple
$$ \tilde \Phi(A_n) := \left( (\lambda_{i_a}(A_n))_{1 \leq a \leq k}, (Q_{i_a}(A_n))_{1 \leq a \leq k}
( n P_{i_a}(A_n)_{p_a,q_a} )_{1 \leq a \leq k} \right).$$
\end{theorem}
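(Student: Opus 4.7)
The approach is the Lindeberg-style swapping strategy of \cite{TVuniv}, \cite{TVedge}, \cite{TVscv}, adapted to accommodate the new coordinates $Q_{i_a}$ and $nP_{i_a,p_a,q_a}$ of $\tilde\Phi$. List the entry positions $(p,q)$ with $1\le p\le q\le n$ in some fixed order, and interpolate between $A_n := \sqrt n M_n$ and $A'_n := \sqrt n M'_n$ through a sequence $A_n = A^{(0)}, A^{(1)}, \ldots, A^{(N)} = A'_n$ in which $A^{(s+1)}$ differs from $A^{(s)}$ only at positions $(p,q)$ and $(q,p)$. By the triangle inequality it suffices to prove a per-swap bound of size $O(n^{-2-c_1})$ at off-diagonal swaps and $O(n^{-1-c_1})$ at diagonal swaps, since these then sum to $O(n^{-c_1})$ across the $\Theta(n^2)$ swaps.

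For a fixed swap, I would condition on all entries outside $(p,q),(q,p)$, write $A(z)$ for the matrix whose $(p,q)$-entry is $z$ (and $(q,p)$-entry is $\bar z$), and set $F(z) := G(\tilde\Phi(A(z)))$. Condition \condone{} permits truncation to $|z| = O(n^{1/2+O(1/C_0)})$ at negligible cost. Taylor expanding $F$ to order four around $z=0$ and invoking the four matching moments of $\zeta_{pq}$ with $\zeta'_{pq}$ (or the two matching moments at the diagonal), the per-swap estimate reduces to a derivative bound of the form
$$F^{(\ell)}(z) = O\bigl(n^{-\ell + O(c_1) + O(\delta) + O(1/C_0)}\bigr), \qquad 0 \le \ell \le 5,$$
uniformly on the conditioned support of $G$.

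Establishing this derivative bound is the technical crux. By the chain rule combined with hypothesis \eqref{G-deriv-trunc}, one reduces to derivative bounds on the $3k$ individual coordinates of $\tilde\Phi(A(z))$. For the eigenvalue coordinates the required estimates are already in \cite[Section 3]{TVuniv}: a single derivative produces a bilinear form in $u_{i_a}$, and higher derivatives generate iterated sums $\sum_{j\neq i_a}(\lambda_{i_a}-\lambda_j)^{-r}$ against products of eigenvector coordinates, controlled by the hypothesis $Q_{i_a}\le n^{c_1}$ built into the support of $G$ together with the eigenvector delocalization estimates from \cite{TVuniv}. The coordinate $Q_{i_a}$, being itself an $\ell^2$-type functional of inverse gaps, admits analogous bounds. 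For the projection coordinate $n P_{i_a,p_a,q_a}(A(z)) = n u_{i_a,p_a}\overline{u_{i_a,q_a}}$, differentiating the standard perturbation formula
$$\frac{\partial P_{i_a}}{\partial z} = \sum_{j\neq i_a} \frac{u_j u_j^* E_{pq} u_{i_a} u_{i_a}^* + u_{i_a} u_{i_a}^* E_{pq} u_j u_j^*}{\lambda_{i_a}-\lambda_j}$$
and iterating yields higher derivatives as nested sums of inverse-gap and eigenvector factors, again controllable via $Q_{i_a}\le n^{c_1}$ and delocalization. Plugging these estimates into the chain-rule expansion produces the desired $F^{(\ell)}(z)$ bound.

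The main obstacle is the combinatorial cost of allowing $k$ up to $n^\delta$. Fifth-order application of the chain rule to $F = G\circ\tilde\Phi$ generates $O((3k)^5) = O(n^{O(\delta)})$ summands, and each summand contains derivatives of $P_{i_a}$ or $\lambda_{i_a}$ which themselves involve spectral sums over $n$ eigenvalues. The $Q_{i_a}$-truncation is precisely the mechanism that bounds these sums polynomially in $n^{c_1}$ rather than catastrophically, but keeping the cumulative loss within the $n^{-1-c_1}$ or $n^{-2-c_1}$ per-swap budget forces $\delta$ small relative to $c_1$, and $C_0$ correspondingly large. This uniformity-in-$k$ bookkeeping is the step where the present argument must follow the machinery of \cite{TVscv} more closely than the simpler $k=O(1)$ case treated in \cite{TVuniv}.
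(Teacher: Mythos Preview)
Your high-level strategy is correct and matches the paper's: Lindeberg swapping, conditioning on the off-swap entries, Taylor expansion of $F(z)=G(\tilde\Phi(A(z)))$ to order four, and reduction to derivative bounds on the individual coordinates of $\tilde\Phi$. For the eigenvalue and $Q$ coordinates you are right to defer to the machinery of \cite{TVuniv}.

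The gap is in your treatment of the projection coordinate $nP_{i_a,p_a,q_a}$, which is the only genuinely new ingredient. You propose to iterate the first-order perturbation formula and control the resulting nested sums ``via $Q_{i_a}\le n^{c_1}$ and delocalization''. That mechanism is too weak. Already at the first derivative, one term is $e_{p_a}^* R_{i_a}(\nabla A) P_{i_a} e_{q_a}$ with $R_{i_a}=\sum_{j\ne i_a}(\lambda_j-\lambda_{i_a})^{-1}P_j$. Delocalization bounds the $P_{i_a}$-factor by $n^{-1+\eps}$, but for the resolvent entry $(R_{i_a})_{p_a,p'}=\sum_{j\ne i_a}u_{j,p_a}\overline{u_{j,p'}}/(\lambda_j-\lambda_{i_a})$, controlling the sum by Cauchy--Schwarz against $Q_{i_a}^{1/2}$ yields only $n^{-1/2+\eps}Q_{i_a}^{1/2}$. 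This gives $|\nabla P_{i_a,p_a,q_a}|\ll n^{-3/2+O(\eps)}$ rather than the required $n^{-2+O(\eps)}$. Each further derivative loses another $\sqrt n$, and the per-swap error becomes far too large to sum over $n^2$ swaps. A related issue is that the support condition on $G$ gives $Q_{i_a}(A(z))\le n^{c_1}$ only at the random $z$, not uniformly on the Taylor ball $|z|\le n^{1/2+\eps_1}$; you need a mechanism to propagate it.

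The paper closes this gap by working on \emph{good configurations}: in addition to delocalization one imposes the eigenvalue separation $|\lambda_j-\lambda_{i_a}|\ge n^{-\eps_1}|j-i_a|$ for $|j-i_a|\ge n^{\eps_1}$, which holds with overwhelming probability for all $z$ on a fine grid (Proposition~\ref{goodconf}). This feeds into a dyadic spectral decomposition $I=P_{i_a}+\sum_\alpha P_{i_a}^{(\alpha)}$ and the refined Frobenius-norm estimates of Proposition~\ref{better-cor}, giving $\|P_{i_a}^{(\alpha)}(\nabla^m P_{i_a})P_{i_a}^{(\beta)}\|_F\ll 2^{-\alpha/2}2^{-\beta/2}n^{-m+O(\eps_1)}$. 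Expanding $e_{p_a}^*(\nabla^m P_{i_a})e_{q_a}$ through the $P_{i_a}^{(\alpha)}$ and using $\|P_{i_a}^{(\alpha)}e_{p_a}\|\ll 2^{\alpha/2}n^{-1/2+\eps_1}$ then recovers the correct $n^{-m-1+O(\eps_1)}$. The $Q$-truncation built into the support of $G$ is indeed used, but only to anchor the argument at one $z_0$; both the propagation to all $z$ and the sharp derivative bound for $P_{i_a}$ require the eigenvalue-separation structure, not merely $Q_{i_a}\le n^{c_1}$ and delocalization.
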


The reduction of Theorem \ref{theorem:main0} to Theorem \ref{theorem:main1} using Lemma \ref{q-bound} proceeds exactly as in \cite[\S 3.3]{TVuniv} and is omitted\footnote{Note that now that $k$ is as large as $O(n^\delta)$, some factors of $O(n^{O(\delta)})$ may be lost in the bounds, but this can be absorbed by the $n^{-c_1}$ gain in the conclusion of Theorem \ref{theorem:main1}.}.

As in \cite{TVuniv}, we adopt the Lindeberg strategy of swapping each matrix entry of $M_n$ (and its transpose) one at a time.  A key definition is that of a \emph{good configuration}, which is a slightly modified version of the same concept from \cite{TVuniv}.  Let $\eps_1, C_1$ be parameters (independent of $n$) to be selected later.  For a given $n$, we fix $k, i_1,\ldots,i_k, G$ as in Theorem \ref{theorem:main1}.

\begin{definition}[Good configuration]  For a complex parameter $z$, let $A(z)$ be a (deterministic) family of $n \times n$ Hermitian matrices of the form
$$ A(z) = A(0) + z e_p e_q^* + \overline{z} e_q e_p^*$$
where $e_p, e_q$ are unit vectors.  We say that $A(z)$ is a \emph{good configuration} if for every $1 \leq a \leq k$ and every $|z| \leq n^{1/2+\eps_1}$ whose real and imaginary parts are multiples of $n^{-C_1}$, we have the following properties:
\begin{itemize}
\item (Eigenvalue separation)  For any $1 \leq i \leq n$ with $|i-i_a| \geq n^{\eps_1}$, we have
\begin{equation}\label{noon}
 |\lambda_i(A(z)) - \lambda_{i_a}(A(z))| \geq n^{-\eps_1} |i-i_a|.
\end{equation}
\item (Delocalization)  
There exists an orthonormal eigenfunction basis $u_1(A(z)),\ldots,u_n(A(z))$ such that
\begin{equation}\label{uimenj}
\sup_{1 \leq i,j \leq n} |u_{i,j}(A(z))| \ll n^{-1/2+\eps_1}
\end{equation}
\end{itemize}
\end{definition}

To show Theorem \ref{theorem:main1}, it then suffices to establish the following two claims.  The first claim, which we shall establish shortly, is an analogue of \cite[Proposition 46]{TVuniv}, and involves a single (deterministic) good configuration $A(z)$:

\begin{proposition}[Replacement given a good configuration]\label{good-replace}  
  Suppose that $C_1$ is sufficiently large, and let $\eps_1 > 0$.  Let $A(z)$ be a good configuration.  Then one has
$$|\E G( \tilde \Phi(A(\zeta)) ) - \E G( \tilde \Phi(A(\zeta') )| = O( n^{-(r+1)/2 + O(\eps_1) + O(\delta)} )$$
whenever $\zeta, \zeta'$ are random complex variables that match to order $r$ for some $r=2,3,4$, and bounded almost surely by $O(n^{1/2+\eps_1})$.
\end{proposition}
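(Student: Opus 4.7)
The plan is to follow the Lindeberg swapping scheme of \cite[Proposition 46]{TVuniv}, augmented to handle derivatives of spectral projections. Set $F(z) := G(\tilde \Phi(A(z)))$, viewed as a smooth function of $(x,y) = (\Re z, \Im z)$, and Taylor expand
\begin{equation*}
 F(z) = \sum_{a+b \le r} \frac{1}{a!\, b!}\, \partial_x^a \partial_y^b F(0)\, x^a y^b + R_r(z).
\end{equation*}
The Taylor coefficients are deterministic given the frozen configuration $A(0)$, so the moment-matching hypothesis gives $\E \bigl[\partial_x^a \partial_y^b F(0)\, (\Re \zeta)^a (\Im \zeta)^b\bigr] = \E \bigl[\partial_x^a \partial_y^b F(0)\, (\Re \zeta')^a (\Im \zeta')^b\bigr]$ for every $a+b \le r$, whence $|\E F(\zeta) - \E F(\zeta')| \le \E|R_r(\zeta)| + \E|R_r(\zeta')|$. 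Using the almost-sure bound $|\zeta|,|\zeta'| \ll n^{1/2+\eps_1}$, the proposition reduces to the pointwise derivative estimate
\begin{equation*}
 \sup_{|z| \le n^{1/2+\eps_1}} \bigl|\partial_x^a \partial_y^b F(z)\bigr| \ll n^{-(a+b) + O(\eps_1) + O(\delta)}
\end{equation*}
for $a+b = r+1 \in \{3,4,5\}$. Values of $z$ not lying on the $n^{-C_1}$-grid are handled by rounding to the nearest grid point, the rounding error being negligible once $C_1$ is chosen large enough.

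By the chain rule and the derivative hypothesis \eqref{G-deriv-trunc} on $G$, it suffices to bound partial derivatives up to order five of each coordinate of $\tilde \Phi(A(z))$, namely $\lambda_{i_a}(A(z))$, $Q_{i_a}(A(z))$, and $n P_{i_a,p_a,q_a}(A(z))$. Standard first-order perturbation theory gives
\begin{equation*}
 \partial_z \lambda_i = \overline{u_{i,p}}\, u_{i,q}, \qquad \partial_z u_i = \sum_{j \neq i} \frac{\overline{u_{j,p}}\, u_{i,q}}{\lambda_i - \lambda_j}\, u_j,
\end{equation*}
and iterating produces formulas for higher derivatives as finite sums of products of eigenvector coefficients divided by products $\prod_\ell (\lambda_i - \lambda_{j_\ell})$. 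Because $A(z)$ is a good configuration, the delocalization bound \eqref{uimenj} gives each coefficient size $O(n^{-1/2+\eps_1})$; the eigenvalue separation \eqref{noon}, combined with the truncation $Q_{i_a}(A(z)) \le n^{c_1}$ built into the support of $G$ (which in particular forces the minimum gap at $i_a$ to be at least $n^{-c_1/2}$), allows one to estimate each sum $\sum_{j \neq i_a} |\lambda_j - \lambda_{i_a}|^{-m}$ for $m \le 5$ by $n^{O(\eps_1) + O(c_1)}$. Assembling these ingredients reproduces the eigenvalue bound of \cite[\S 3.3]{TVuniv}; the $Q_{i_a}$ coordinate is treated similarly by direct differentiation of $\sum_{j \neq i_a}|\lambda_j - \lambda_{i_a}|^{-2}$.

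The main new obstacle lies in the projection coefficient $n P_{i_a,p_a,q_a}(A(z)) = n\, u_{i_a, p_a}(A(z))\, \overline{u_{i_a,q_a}(A(z))}$. Unlike $\partial_z \lambda_i$, whose leading contribution $\overline{u_{i,p}} u_{i,q}$ carries no small denominator, the first derivative of $u_{i_a}$ already contains a factor $1/(\lambda_{i_a} - \lambda_j)$, and the $\ell$th derivative can contain up to $\ell$ such factors, so one must track carefully how gap denominators accumulate under iterated differentiation. The bookkeeping is that each additional differentiation introduces two more eigenvector coefficients (contributing a gain of $n^{-1+O(\eps_1)}$ via \eqref{uimenj}) together with one additional gap denominator, whose sum over the new intermediate index costs at most $n^{O(\eps_1)+O(c_1)}$; the explicit prefactor $n$ in $n P_{i_a,p_a,q_a}$ is absorbed by the first two coefficient factors. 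Provided $\eps_1, c_1, \delta$ are chosen small, this yields the required $n^{-(r+1)+O(\eps_1)+O(\delta)}$ bound on $(r+1)$st partial derivatives; multiplying by the a priori bound $|\zeta|^{r+1} \ll n^{(r+1)(1/2+\eps_1)}$ and assembling exactly as in \cite[Proposition 46]{TVuniv} completes the proof.
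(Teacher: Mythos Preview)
Your overall architecture---Taylor expand $F(z)=G(\tilde\Phi(A(z)))$, use moment matching to kill the polynomial part, and reduce to derivative bounds of order $\le 5$ on each coordinate of $\tilde\Phi$---is exactly the strategy the paper follows, and your treatment of the $\lambda_{i_a}$ and $Q_{i_a}$ coordinates by appealing to \cite[\S 3.3, Lemma~59]{TVuniv} is fine.

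The gap is in your handling of $nP_{i_a,p_a,q_a}$. You write down $\partial_z u_i = \sum_{j\ne i}\frac{\overline{u_{j,p}}\,u_{i,q}}{\lambda_i-\lambda_j}u_j$ and assert that ``iterating produces formulas for higher derivatives as finite sums \ldots divided by products $\prod_\ell(\lambda_i-\lambda_{j_\ell})$.'' This structural claim is what makes the rest of your bookkeeping go through, but it does \emph{not} follow from iterating the displayed formula. To compute $\partial_z^2 u_i$ you must differentiate the intermediate $u_j$'s, and $\partial_z u_j$ carries denominators $(\lambda_j-\lambda_k)^{-1}$ with \emph{neither} index equal to $i_a$. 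The good-configuration hypotheses give no control whatsoever on such cross-gaps: \eqref{noon} only separates eigenvalues from $\lambda_{i_a}$, and the support condition only bounds $Q_{i_a}$, not $Q_j$ for generic $j$. So if you estimate the naive expansion term by term, the sums $\sum_{j,k}|\lambda_i-\lambda_j|^{-1}|\lambda_j-\lambda_k|^{-1}$ are genuinely uncontrolled and the argument fails.

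The fix---and this is what the paper does---is to work not with $u_i$ but with $P_i$ and the reduced resolvent $R_i=\sum_{j\ne i}(\lambda_j-\lambda_i)^{-1}P_j$. The recursions $\dot P_i=-P_i\dot A R_i-R_i\dot A P_i$ and $\dot R_i=-R_i\dot A R_i+\dot\lambda_i R_i^2+R_i^2\dot A P_i+P_i\dot A R_i^2$ (cf.\ \cite[Lemma~54]{TVuniv}) show that every $\nabla^m P_i$ is a polynomial in $P_i,R_i,\dot A$, so only powers of $R_i$ appear and hence only denominators $(\lambda_j-\lambda_i)^{-\ell}$. The paper packages the resulting estimates as Proposition~\ref{better-cor} (i.e.\ \cite[Corollary~58]{TVuniv}), which gives Frobenius bounds on the dyadic blocks $P_{i_a}^{(\alpha)}(\nabla^m P_{i_a})P_{i_a}^{(\beta)}$; the scalar $n(\nabla^m P_{i_a})_{p_a,q_a}$ is then recovered by writing $e_{p_a}^*(\nabla^m P_{i_a})e_{q_a}=\sum_{\alpha,\beta}(e_{p_a}^*P^{(\alpha)})(P^{(\alpha)}\nabla^m P_{i_a}P^{(\beta)})(P^{(\beta)}e_{q_a})$, using delocalization \eqref{uimenj} to bound $\|P^{(\alpha)}e_{p_a}\|\ll 2^{\alpha/2}n^{-1/2+\eps_1}$, and applying Cauchy--Schwarz. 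Once you replace your naive eigenvector iteration by the $P_i,R_i$ recursion (or simply invoke Proposition~\ref{better-cor}), your bookkeeping heuristic becomes correct and the rest of your argument goes through.
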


The second claim is an analogue of \cite[Proposition 48]{TVuniv}:

\begin{proposition}[Good configurations occur very frequently]\label{goodconf}
Let $\eps_1 > 0$ and $C_1 \geq 1$, and assume that $C_0 \geq 1$ is sufficiently large depending on $\eps_1$.  Let $A(0) = (\zeta_{ij})_{1 \leq i,j \leq n}$ be a random Hermitian matrix with independent upper-triangular entries and $|\zeta_{ij}| \leq n^{1/2+10/C_0}$ for all $1 \leq i,j \leq n$, with $\zeta_{pq}=\zeta_{qp}=0$, but with $\zeta_{ij}$ having mean zero and variance $n$ for all other $ij$, and also being distributed continuously in the complex plane.  Then $A(0)$ is a good configuration with overwhelming probability.
\end{proposition}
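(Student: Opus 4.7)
The plan is to adapt the proof of \cite[Proposition 48]{TVuniv} essentially verbatim. The only genuinely new features of the present setting are that the indices $i_a$ may now range over $k \le n^\delta$ choices rather than $k=O(1)$, and that the single matrix entry $(p,q)$ has been zeroed out. Both are inexpensive: the index $a$ contributes only a polynomial factor to the eventual union bound, and the deletion of one entry (replaced by the deterministic value $z$) amounts to a rank-two perturbation of operator norm $O(n^{1/2+\eps_1})$ that, by Cauchy interlacing and standard rank-two eigenvector perturbation bounds, propagates into at most an $O(1)$ shift in the eigenvalue index and an $O(n^{-1/2+\eps_1})$ correction to the sup-norm of an eigenvector.

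First I would discretize over $z$: the grid of admissible $z$ has size $n^{O(C_1)}$, so by a union bound it suffices to prove the eigenvalue-separation condition \eqref{noon} and the delocalization condition \eqref{uimenj} with overwhelming probability at each fixed $z$ on the grid and at each fixed $a$. The proposition is then assembled by taking the union bound over the grid, over $1 \le a \le k$, and (in the delocalization case) over the $n^2$ pairs $(i,j)$; all of these are polynomial losses and are harmless against an overwhelming-probability rate.

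For the separation \eqref{noon}, I would combine the gap theorem (Theorem \ref{gap}) with the iterative/telescoping argument of \cite[Lemma 49]{TVuniv}, applied to consecutive eigenvalues between $\lambda_{i_a}$ and $\lambda_i$, to upgrade a single-gap bound into the required bulk-spacing separation $|\lambda_i(A(z)) - \lambda_{i_a}(A(z))| \ge n^{-\eps_1}|i-i_a|$ whenever $|i-i_a| \ge n^{\eps_1}$. For the delocalization \eqref{uimenj}, I would invoke the delocalization estimate for Wigner matrices obeying \condone (developed in \cite{TVuniv} and refined in \cite{TVscv}, and closely related to \cite{ESY3}), which gives $\|u_i\|_\infty \ll n^{-1/2+\eps_1/2}$ for each individual $i$ with overwhelming probability; this then transfers back to $A(z)$ via the rank-two perturbation argument above.

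The main obstacle is obtaining overwhelming probability rather than merely high probability under condition \condone alone, since Theorem \ref{gap} as stated only asserts high probability. This is handled by exploiting the freedom to take the constant $C_0$ in \condone arbitrarily large: with sufficiently many moments on the entries, the truncation and tail estimates used in the proofs of the gap and delocalization theorems convert, via Markov-type arguments, into arbitrary polynomial decay, exactly as in \cite{TVscv}. No ideas beyond those already available in \cite{TVuniv,TVscv,ESY3} are needed; the novelty is purely bookkeeping to accommodate $k$ as large as $n^\delta$.
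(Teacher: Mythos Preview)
Your overall structure (discretize over $z$, union-bound over the grid and over $a$, transfer via a rank-two perturbation) matches the paper's, and your treatment of the delocalization condition \eqref{uimenj} is in the right spirit; the paper simply cites \cite[Proposition 1.12]{TVedge} for the delocalization and \cite[Corollary 63]{TVuniv} for the zeroed-entry correction, which is essentially your rank-two perturbation.

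The genuine gap is in your route to the eigenvalue-separation condition \eqref{noon}. You propose to obtain it by telescoping the gap theorem (Theorem \ref{gap}) across consecutive eigenvalues between $\lambda_{i_a}$ and $\lambda_i$. This cannot yield overwhelming probability: Theorem \ref{gap} gives $\P(|\lambda_{i+1}-\lambda_i|\le n^{-c_0})\le n^{-c_1}$ with $c_1$ depending only on $c_0$, and this is essentially sharp --- even for GUE, level repulsion forces $\P(|\lambda_{i+1}-\lambda_i|\le s)\asymp s^2$ in the bulk, so the failure probability for a single gap is stuck at a fixed power of $n$ regardless of how many moments the entries possess. Your proposed fix of ``taking $C_0$ large converts high to overwhelming via Markov-type arguments'' applies to truncation and tail errors, not to the intrinsic probability of a small spectral gap; increasing $C_0$ does not move $c_1$. (Also, \cite[Lemma 49]{TVuniv} is the bound on $Q_i$, i.e.\ the present Lemma \ref{q-bound}, not a telescoping device.) After the union bound over the $n^{O(C_1)}$ grid points, a merely high-probability input would be swamped.

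The point that \cite[\S 5]{TVuniv} exploits, and which you are missing, is that \eqref{noon} is only demanded for $|i-i_a|\ge n^{\eps_1}$: it is a statement about \emph{cumulative} spacing over blocks of at least $n^{\eps_1}$ eigenvalues, not about any individual consecutive gap. That cumulative statement follows from concentration of the eigenvalue counting function (equivalently, the local semicircle law / eigenvalue rigidity), and \emph{this} does hold with overwhelming probability under \condone\ for $C_0$ large. The paper therefore just cites \cite[\S 5]{TVuniv} for \eqref{noon} directly; replacing your gap-theorem telescoping by a rigidity/counting-function argument would close the gap.
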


\begin{proof} This is almost identical to the arguments\footnote{The hypotheses in \cite[\S 5]{TVuniv} did not have the loss of $n^{10/C_0}$ in the bound for $\zeta_{ij}$, but such bounds only cause losses of $O(n^{O(1/C_0)})$ in the final bounds, which can be absorbed into the $n^{\eps_1}$ factors in the definition of a good configuration if $C_0$ is sufficiently large depending on $\eps_1$.} in \cite[\S 5]{TVuniv}.  Those arguments already give \eqref{noon} with overwhelming probability.  To obtain the delocalization of eigenvectors, one can use \cite[Proposition 1.12]{TVedge} (see also the proof of \cite[Corollary 63]{TVuniv} to deal with the fact that the $pq$ and $qp$ entries are not random).    
\end{proof}

With these two propositions, we can establish Theorem \ref{theorem:main1} by first using Condition {\condone} to truncate to the case when $M_n, M'_n$ have entries of size $O(n^{10/C_0})$ (say), and perturbing them to be continuous, and then replacing the entries of $M_n$ with $M'_n$ one at a time just as in \cite[\S 3.3]{TVuniv}.  Because the entries of $M_n$ and $M'_n$ match to order $4$ off the diagonal and to order $2$ on the diagonal, each of the $O(n^2)$ off-diagonal replacements costs an error of $O(n^{-5/2 + O(\eps_1) + O(\delta)})$, while each of the $O(n)$ diagonal replacements costs an error of $O(n^{-3/2+O(\eps_1)+O(\delta)})$, and so the net error is acceptable if $\eps_1,\delta$ are sufficiently small (and if $C_0$ is large enough that Proposition \ref{goodconf} applies).

It remains to establish Proposition \ref{good-replace}.  As in \cite{TVuniv}, we will need derivative bounds on various spectral statistics of $A(z)$.
For each $1 \leq i \leq n$, we introduce the resolvent-type matrices
$$ R_i(A) = \sum_{j \neq i} \frac{1}{\lambda_j(A)-\lambda_i(A)} P_j(A).$$

\begin{lemma} Let $1 \leq i \leq n$, and let $A_0$ be a Hermitian matrix
which has a simple eigenvalue at $\lambda_i(A_0)$.
Then $\lambda_i(A)$, $P_i(A)$, $R_i(A)$, and $Q_i(A)$ depend smoothly on $A$
in a neighborhood of $A_0$.
\end{lemma}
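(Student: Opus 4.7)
The plan is to apply the holomorphic (Dunford--Riesz) functional calculus. Since $\lambda_i(A_0)$ is simple, fix $\varepsilon>0$ strictly smaller than half the distance from $\lambda_i(A_0)$ to the rest of $\operatorname{spec}(A_0)$. By upper semicontinuity of the spectrum in operator norm, there is a neighborhood $\mathcal U$ of $A_0$ such that for every $A\in\mathcal U$, exactly one eigenvalue of $A$ (namely $\lambda_i(A)$) lies in the closed disc $D$ of radius $\varepsilon$ about $\lambda_i(A_0)$, while all other eigenvalues stay at distance at least $\varepsilon$ from $D$.

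First I would represent the spectral projection as the contour integral
$$ P_i(A) = \frac{1}{2\pi\sqrt{-1}} \oint_{\gamma} (zI - A)^{-1}\, dz, $$
where $\gamma = \partial D$ is traversed once counterclockwise. For $A\in\mathcal U$, the contour $\gamma$ avoids $\operatorname{spec}(A)$, so the integrand is a rational (hence real-analytic) function of the entries of $A$, and smoothness of $P_i(A)$ follows by differentiation under the integral sign. Since $P_i(A)$ has rank one with $A P_i(A) = \lambda_i(A) P_i(A)$ and $\operatorname{trace}(P_i(A)) = 1$, we obtain $\lambda_i(A) = \operatorname{trace}(A P_i(A))$, which is smooth as well.

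For $R_i$ I would take a second contour $\gamma'$ (depending continuously on $A\in\mathcal U$, or chosen once and for all in $\mathcal U$) which encloses all of $\operatorname{spec}(A)\setminus\{\lambda_i(A)\}$ but not $\lambda_i(A)$; for instance a large circle around the origin together with a small clockwise indentation around $D$. The meromorphic matrix-valued integrand
$$ \frac{1}{z - \lambda_i(A)}(zI - A)^{-1} = \sum_{j=1}^n \frac{P_j(A)}{(z-\lambda_i(A))(z-\lambda_j(A))} $$
has, inside $\gamma'$, only simple poles at $z=\lambda_j(A)$ for $j\ne i$, with residues $\frac{P_j(A)}{\lambda_j(A)-\lambda_i(A)}$. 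The residue theorem therefore yields
$$ R_i(A) = \frac{1}{2\pi\sqrt{-1}} \oint_{\gamma'} \frac{1}{z - \lambda_i(A)}(zI - A)^{-1}\, dz. $$
Because both $\lambda_i(A)$ and $\operatorname{spec}(A)\setminus\{\lambda_i(A)\}$ stay uniformly off $\gamma'$ for $A\in\mathcal U$, the integrand is jointly smooth in $(z,A)$ along the contour, giving smoothness of $R_i(A)$. Finally, Hermiticity of $A$ makes $R_i(A)$ Hermitian too, so $Q_i(A) = \operatorname{trace}(R_i(A)^* R_i(A)) = \operatorname{trace}(R_i(A)^2)$ is smooth by composition.

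The only calibration needing explicit verification is that $\mathcal U$ can be shrunk so $\lambda_i(A)$ never crosses $\gamma'$ and no other eigenvalue ever crosses $\gamma$; this is immediate from the preliminary choice of $\varepsilon$. No serious obstacle is expected: this is classical Kato-type perturbation theory, and the real-analyticity one actually extracts from the contour integrals is stronger than the smoothness asserted in the lemma.
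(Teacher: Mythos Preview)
Your argument is correct and is the standard Kato--Riesz functional calculus route to this kind of perturbation result. The paper does not actually supply a proof here; it simply refers to \cite[Lemma 54]{TVuniv}, so there is no ``paper's proof'' to compare against in detail. Your contour-integral representations of $P_i$ and $R_i$, together with $\lambda_i = \operatorname{trace}(AP_i)$ and $Q_i = \operatorname{trace}(R_i^2)$, give exactly the smoothness claimed, and in fact real-analyticity.

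One small remark on presentation: the two-component contour $\gamma'$ for $R_i$ is fine but slightly heavy. Once you already have the smoothness of $\lambda_i$ and $P_i$, a quicker route is the identity
\[
R_i(A) \;=\; \bigl(A - \lambda_i(A) I + P_i(A)\bigr)^{-1} - P_i(A),
\]
valid because $A-\lambda_i I + P_i$ acts as the identity on $\operatorname{ran} P_i$ and as $A-\lambda_i I$ on its orthocomplement, hence is invertible with inverse $R_i + P_i$. Since matrix inversion is smooth on the invertible locus, this immediately gives smoothness of $R_i$ without a second contour. Either way the lemma follows.
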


\begin{proof}  See \cite[Lemma 54]{TVuniv}.
\end{proof}

Now we turn to more quantitative bounds on derivatives.  If $f(z)$ is a scalar, vector, or matrix-valued function depending smoothly (but not holomorphically) on a complex parameter $z$, we define the derivatives $\nabla^m f$ of $f$ to be the vector (or tensor)-valued quantity
$$\nabla^m f(z) := \left(\frac{\partial^m f}{\partial \Re(z)^l \partial \Im(z)^{m-l}}\right)_{l=0}^m.$$
We have a crude bound:

\begin{lemma}[Crude bound]\label{crude}  Let $A = A(z)$ be an $n \times n$ Hermitian matrix varying (real)-linearly in $z$ (thus $\nabla^k A = 0$ for $k \geq 2$), with
$$ \| \nabla A \|_{op} \leq V$$
for some $V > 0$.  Let $1 \leq i \leq n$.
At some fixed value of $z$, suppose we have the spectral gap condition
\begin{equation}\label{laj}
|\lambda_j(A(z)) - \lambda_i(A(z))| \geq r
\end{equation}
for all $j \neq i$ and some $r > 0$ (in particular, $\lambda_i(A(z))$ is a simple eigenvalue).
Then for all $k \geq 1$ we have (at this fixed choice of $z$)
\begin{equation}\label{li-crude}
 |\nabla^{k} \lambda_i(A(z))| \ll_k V^k r^{1-k}
\end{equation}
and
\begin{equation}\label{pi-crude}
 \| \nabla^{k} P_i(A(z)) \|_{op} \ll_k V^k r^{-k}
\end{equation}
and
\begin{equation}\label{ri-crude}
 \| \nabla^{k} R_i(A(z)) \|_{op} \ll_k V^k r^{-k-1}.
\end{equation}
and
\begin{equation}\label{xi-crude}
 |\nabla^{k} Q_i| \ll_k n V^k r^{-k-2}.
\end{equation}
\end{lemma}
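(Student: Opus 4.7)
The plan is to prove the four estimates in sequence: first $\|\nabla^k P_i\|_{op}$ by contour integration, then $|\nabla^k \lambda_i|$ and $\|\nabla^k R_i\|_{op}$ jointly by induction on $k$ using standard perturbation formulas, and finally $|\nabla^k Q_i|$ from the identity $Q_i = \operatorname{tr}(R_i^2)$.

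For \eqref{pi-crude}, I would invoke the Riesz projection formula $P_i(A(w)) = \frac{1}{2\pi \sqrt{-1}}\oint_\Gamma (\zeta I - A(w))^{-1}\, d\zeta$, where $\Gamma$ is the positively oriented circle of radius $r/2$ centred at $\lambda_i(A(z))$. By the spectral gap hypothesis \eqref{laj}, $\Gamma$ encloses only $\lambda_i$, and on $\Gamma$ the resolvent satisfies $\|(\zeta I - A)^{-1}\|_{op} \leq 2/r$. Differentiating under the integral sign and using $\nabla^j A = 0$ for $j \geq 2$, the derivative $\nabla^k(\zeta I - A)^{-1}$ unfolds as a sum of $O_k(1)$ products, each containing $k$ copies of $\nabla A$ (operator norm $\leq V$) interleaved with $k+1$ copies of $(\zeta I - A)^{-1}$; hence $\|\nabla^k(\zeta I - A)^{-1}\|_{op} \ll_k V^k r^{-k-1}$. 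Integrating over $\Gamma$ (of length $\pi r$) gives $\|\nabla^k P_i\|_{op} \ll_k V^k r^{-k}$, which is \eqref{pi-crude}.

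For \eqref{li-crude} and \eqref{ri-crude} I would proceed by simultaneous induction on $k$, based on the first-order perturbation formulas $\nabla \lambda_i = u_i^* B u_i$, $\nabla u_i = -R_i B u_i$, and $\nabla R_i = R_i^2 B P_i + P_i B R_i^2 - R_i B R_i + (\nabla\lambda_i) R_i^2$, where $B := \nabla A$; these follow from the eigenvalue equation $A u_i = \lambda_i u_i$ together with the identity $R_i (A-\lambda_i I) = I - P_i$. Leibniz's rule gives the closed form $\nabla^k \lambda_i = \sum_{a+b=k-1}\binom{k-1}{a}(\nabla^a u_i^*) B (\nabla^b u_i)$, so the target bound $|\nabla^k \lambda_i| \ll_k V^k r^{1-k}$ reduces to the auxiliary estimate $\|\nabla^j u_i\| \ll_j V^j r^{-j}$ for $j < k$. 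The latter estimate follows by iterating $\nabla u_i = -R_i B u_i$ and applying the inductive bounds on $\nabla^{j'} R_i$ and $\nabla^{j'} u_i$ for $j' < j$; the $R_i$ bound at level $k$ is obtained analogously by differentiating its perturbation formula $k-1$ times and plugging in the inductive bounds on $\nabla^{j'} P_i$, $\nabla^{j'} R_i$, $\nabla^{j'} \lambda_i$ for $j' \leq k-1$ (together with the bound on $\nabla^k \lambda_i$ just established at this level). A careful bookkeeping shows that the three bounds close together at each level once they hold at all lower levels.

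Finally, \eqref{xi-crude} follows from $Q_i = \operatorname{tr}(R_i^2)$ (a direct consequence of $P_j P_l = \delta_{jl} P_j$): Leibniz expands $\nabla^k R_i^2$ as $\sum_j \binom{k}{j}(\nabla^j R_i)(\nabla^{k-j} R_i)$, whose operator norm is $\ll_k V^k r^{-k-2}$ by \eqref{ri-crude}, and the estimate $|\operatorname{tr}(X)| \leq n \|X\|_{op}$ contributes the single factor of $n$. The main technical obstacle is to avoid an analogous factor of $n$ in the eigenvalue bound \eqref{li-crude}: the naive estimate $|\nabla^k \lambda_i| = |\operatorname{tr}(B\nabla^{k-1} P_i)| \leq n V \|\nabla^{k-1}P_i\|_{op}$ loses a factor of $n$. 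The remedy, built into the approach above, is to treat $\lambda_i = u_i^* A u_i$ as a scalar built from the rank-one data $u_i$ and to carry out all differentiation at that level, so that the matrix $A$ itself never appears bare in any intermediate expression but is always absorbed either through $A u_i = \lambda_i u_i$ or through the bounded operator $R_i$.
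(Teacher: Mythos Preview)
The paper does not give a self-contained proof; it simply cites \cite[Lemma~56]{TVuniv}. Your argument is correct and is in the same spirit as that reference (recursive differentiation of the first-order perturbation identities for $\lambda_i$, $P_i$, $R_i$), with the contour-integral derivation of \eqref{pi-crude} being a clean alternative entry point.

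One small point worth making explicit: the formula $\nabla u_i=-R_iBu_i$ is gauge-dependent. For a general smooth choice of unit eigenvector one only has $\nabla u_i=-R_iBu_i+\sqrt{-1}\,c\,u_i$ for some real scalar function $c$, and with two real parameters $(\Re z,\Im z)$ one cannot in general kill both components of $c$ simultaneously. Since $\lambda_i$ is gauge-invariant this does not affect the final answer, but to make the auxiliary bound $\|\nabla^{j}u_i\|\ll_j V^{j}r^{-j}$ literally true you should either (a) reduce to one-parameter directional derivatives $t\mapsto A(z+tv)$ and impose the parallel-transport gauge $u_i^*\partial_t u_i=0$ along each line (this suffices because $\nabla^k$ is determined by directional derivatives), or (b) bypass $u_i$ entirely by observing that $\nabla^{k-1}P_i$ has rank $O_k(1)$: iterating $\nabla P_i=-R_iBP_i-P_iBR_i$ via Leibniz expresses $\nabla^{k-1}P_i$ as an $O_k(1)$-term sum in which every summand carries a factor $\nabla^{b}P_i$ with $b<k-1$, so bounded rank propagates by induction. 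Then
\[
|\nabla^{k}\lambda_i|=|\operatorname{tr}(B\,\nabla^{k-1}P_i)|\le \|B\|_{op}\,\|\nabla^{k-1}P_i\|_{\mathrm{tr}}\ll_k V^{k}r^{1-k},
\]
which is exactly the device you describe for avoiding the spurious factor of $n$, phrased without reference to $u_i$.
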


\begin{proof} See \cite[Lemma 56]{TVuniv}.
\end{proof}

In practice, this crude bound is insufficient, and we will need the following more advanced bound:

\begin{proposition}[Better bound]\label{better-cor} Let $A = A(z)$ be an $n \times n$ matrix depending on a complex parameter $z$ of the form
$$ A(z) = A(0) + z e_p e_q^* + \overline{z} e_q e_p^*$$
for some vectors $e_p,e_q$.  We abbreviaate $\lambda_i = \lambda_i(A(z))$, $P_i = P_i(A(z))$, etc.

Let $1 \leq i \leq n$.  At some fixed value of $z$, suppose that $\lambda_i = \lambda_i(A(z))$ is a simple eigenvalue, and that we have a partition
$$ I = P_i + \sum_{\alpha \in J} P_\alpha$$
where $J$ is a finite index set, and $P_\alpha$ are orthogonal projections to invariant spaces on $A$ (i.e. to spans of eigenvectors not corresponding to $\lambda_i$).  Suppose that on the range of each $P_\alpha$, the eigenvalues of $A-\lambda_i$ have magnitude at least $r_\alpha$ for some $r_\alpha > 0$.  Suppose also that we have the incompressibility bounds
$$ \| P_\alpha e_p \|, \| P_\alpha e_q \| \leq w d_\alpha^{1/2} $$
for all $\alpha \in J \cup \{i\}$ and some $w > 0$ and $d_\alpha \geq 1$, with $d_i := 1$.  Then at this value of $z$, and for all $k \geq 1$, we have the bounds
\begin{align}
 |\nabla^{k} \lambda_i| &\ll_k \left(\sum_{\alpha \in J} \frac{d_\alpha}{r_\alpha}\right)^{k-1} w^{2k}\label{luck2}\\
\| P_i(\nabla^k P_i) P_i \|_F &\ll_k \left(\sum_{\alpha \in J} \frac{d_\alpha}{r_\alpha}\right)^{k} w^{2k}\label{extra-1}\\
\| P_\alpha(\nabla^k P_i) P_i \|_F =
\| P_i(\nabla^k P_i) P_\alpha \|_F
 &\ll_k \frac{d_\alpha^{1/2}}{r_\alpha} \left(\sum_{\alpha \in J} \frac{d_\alpha}{r_\alpha}\right)^{k-1} w^{2k}\label{extra-2}\\
\| P_\alpha(\nabla^k P_i) P_\beta \|_F
 &\ll_k \frac{d_\alpha^{1/2} d_\beta^{1/2}}{r_\alpha r_\beta} \left(\sum_{\alpha \in J} \frac{d_\alpha}{r_\alpha}\right)^{k-2} w^{2k}\label{extra-3}\\
|\nabla^k Q_i| &\ll_k \left(\sum_{\alpha \in J} \frac{d_\alpha}{r_\alpha}\right)^{k+2} w^{2k}\label{furioso}
\end{align}
for all $k \geq 0$ and all $\alpha,\beta \in J$ at this value of $z$.  Here $\|T\|_F := (\tr TT^*)^{1/2}$ is the Frobenius norm of $T$.
\end{proposition}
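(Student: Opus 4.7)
The plan is to proceed by direct perturbation theory via the resolvent, adapting the strategy behind the crude bound in Lemma \ref{crude} but exploiting both the rank-two structure of the perturbation and the incompressibility hypotheses on $P_\alpha e_p, P_\alpha e_q$. Since $A(z) = A(0) + z e_p e_q^* + \bar z e_q e_p^*$ depends real-linearly on $z$, the partial derivatives $\partial_{\Re z} A$ and $\partial_{\Im z} A$ are fixed rank-$\leq 2$ matrices with range in $\mathrm{span}(e_p,e_q)$, and $\nabla^m A = 0$ for $m \geq 2$. This is precisely what allows one to upgrade the bounds in Lemma \ref{crude}, which only use $\|\nabla A\|_{op} \leq V$, to the refined form claimed here.

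The first step is to write the spectral data as contour integrals, $P_i = \frac{1}{2\pi i}\oint_\Gamma (wI-A)^{-1}\, dw$ and $\lambda_i P_i = \frac{1}{2\pi i}\oint_\Gamma w(wI-A)^{-1}\, dw$, where $\Gamma$ is a small circle around $\lambda_i$ of radius less than $\min_\alpha r_\alpha$; similarly $Q_i$ is expressible as a contour integral with a double-pole kernel. Differentiating with $\partial_z(wI-A)^{-1} = (wI-A)^{-1}(\partial_z A)(wI-A)^{-1}$ (and the analogue in $\Re z, \Im z$) and using $\nabla^{\geq 2} A = 0$ expresses $\nabla^k$ of any of these quantities as a finite sum of contour integrals of products of $k+1$ resolvents interleaved with $k$ copies of $\nabla A$. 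Inserting the partition $I = P_i + \sum_{\alpha\in J} P_\alpha$ between factors and using that each $P_\beta$ commutes with $A$ (so $P_\beta R(w) P_\beta$ restricts to a resolvent on $\mathrm{range}(P_\beta)$), the contour integrals can be evaluated term by term: each interior projection $P_\alpha$ with $\alpha \in J$ contributes a factor of size $\lesssim 1/r_\alpha$, while residues at $\lambda_i$ select the $P_i$ components. The outcome is that $\nabla^k\lambda_i$, $\nabla^k P_i$, and $\nabla^k Q_i$ are represented as finite combinations of products of the form
\begin{equation*}
\Big(\prod_{j} r_{\alpha_j}^{-m_j}\Big)\, P_{\beta_0}(\nabla A) P_{\alpha_1}(\nabla A) P_{\alpha_2}\cdots P_{\alpha_{k-1}}(\nabla A) P_{\beta_1},
\end{equation*}
with the outer projections $P_{\beta_0}, P_{\beta_1}$ determined by which conclusion is being proved and the exponents $m_j$ summing to $k-1$ for $\lambda_i$, to $k$ for $P_i$, and to $k+2$ for $Q_i$.

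The second step is to bound each such product using the rank-two structure. Writing $\nabla A = \sum_{r,s\in\{p,q\}} c_{rs}\, e_r e_s^*$, each block $P_{\alpha_{j-1}}(\nabla A)P_{\alpha_j}$ is a sum of rank-one operators $(P_{\alpha_{j-1}} e_r)(P_{\alpha_j} e_s)^*$. Telescoping, the middle projections yield inner products $\langle P_{\alpha_j} e_r, P_{\alpha_j} e_s\rangle$, each bounded by $w^2 d_{\alpha_j}$ via the incompressibility hypothesis, while the outer vectors contribute at most $\|P_{\beta_0} e_r\|\,\|P_{\beta_1} e_s\| \leq w^2 d_{\beta_0}^{1/2} d_{\beta_1}^{1/2}$ to the Frobenius norm of the resulting rank-one matrix. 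This gives each term the uniform bound $O_k(1)\cdot w^{2k}\cdot d_{\alpha_1}\cdots d_{\alpha_{k-1}}\cdot d_{\beta_0}^{1/2} d_{\beta_1}^{1/2}$. Summing over the internal indices $\alpha_1,\ldots,\alpha_{k-1}\in J$ with the weights $1/r_{\alpha_j}$ produces exactly the factor $(\sum_{\alpha\in J} d_\alpha/r_\alpha)^{k-1}$; the outer factor $d_\alpha^{1/2}/r_\alpha$ then appears precisely when $\beta_0$ (or $\beta_1$) equals $\alpha\in J$, giving \eqref{luck2}, \eqref{extra-1}, \eqref{extra-2}, \eqref{extra-3} in a unified way (with the convention $d_i:=1$, $1/r_i:=0$). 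For \eqref{furioso} the two extra resolvents in the contour representation of $Q_i$ contribute two extra powers of $\sum_\alpha d_\alpha/r_\alpha$.

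The main obstacle will be the combinatorial bookkeeping in the contour step: one needs to verify that the number of $1/r_\alpha$ factors in each term matches the claimed exponents (the subtle difference between the $(k-1)$th, $k$th, and $(k+2)$th powers across \eqref{luck2}--\eqref{furioso}) and that the Frobenius-norm form of \eqref{extra-1}--\eqref{extra-3} — including the correct dependence on the outermost $d_{\beta_j}^{1/2}/r_{\beta_j}$ — is extracted without loss. The cleanest way to avoid drowning in contour formulas is an induction on $k$: differentiate the identities $A P_i = \lambda_i P_i$, $P_i^2 = P_i$, and $R_i = \sum_\alpha P_\alpha/(\lambda_\alpha - \lambda_i)$ once at a time, and propagate the bounds on $\nabla^{k-1}$ of each spectral object to $\nabla^k$, exactly in the spirit of \cite[Proposition 57]{TVuniv} but now tracking projections on both sides of the products rather than only estimating their operator norm.
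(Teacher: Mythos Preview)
Your proposal is correct and matches the paper's approach: the paper's proof is simply a reference to \cite[Corollary 58]{TVuniv} (a specialization of \cite[Lemma 57]{TVuniv}), and the inductive differentiation of $AP_i=\lambda_i P_i$, $P_i^2=P_i$, and the $R_i$ identity that you outline in your final paragraph is exactly the mechanism used there, with your contour-integral formulation being an equivalent repackaging of the same combinatorics. (Minor correction: it is Lemma 57, not Proposition 57, in \cite{TVuniv}.)
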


\begin{proof} See \cite[Corollary 58]{TVuniv}, which is a special case of \cite[Lemma 57]{TVuniv}.  The bounds \eqref{extra-1}, \eqref{extra-2}, \eqref{extra-3} do not appear explicitly in \cite[Corollary 58]{TVuniv}, but come directly from the equations \cite[(73), (74), (75)]{TVuniv} in \cite[Lemma 57]{TVuniv} after plugging in the parameters indicated in the proof of \cite[Corollary 58]{TVuniv}.
\end{proof}

We can now prove Proposition \ref{good-replace} and thus Theorem \ref{theorem:main0}.  This will be a repetition of the material in \cite[Section 4.3]{TVuniv}; we sketch the main points here.  

Fix $k \geq 1$, $r=2,3,4$ and $\eps_1 > 0$, and suppose that $C_1$ is sufficiently large.  We assume $A(0), e_p, e_q, i_1,\ldots,i_k, G, F, \zeta, \zeta'$ are as in the proposition.

We may of course assume that $F(z_0) \neq 0$ for at least one $z_0$ with $|z_0| \leq n^{1/2 +\eps_1}$, since the claim is vacuous otherwise.

Using Taylor expansion and the chain rule exactly as in \cite[Section 4.3]{TVuniv}, it suffices to show that

\begin{lemma}  Suppose that $F(z_0) \neq 0$ for at least one $z_0$ with $|z_0| \leq n^{1/2 +\eps_1}$.  Then for \emph{all} $z$ with $|z_0| \leq n^{1/2 +\eps_1}$, and all $1 \leq j \leq k$, we have
\begin{equation}\label{lam1}
 |\nabla^{m} \lambda_{i_j}(A(z))| \ll n^{O(\eps_1)} n^{-m}
\end{equation}
and
\begin{equation}\label{lam2}
 n |\nabla^{m} P_{i_j,p_j,q_j}(A(z))| \ll n^{O(\eps_1)} n^{-m}
\end{equation}
and
\begin{equation}\label{lam3}
 |\nabla^{m} Q_{i_j}(A(z))| \ll n^{O(\eps_1)} n^{-m}
\end{equation}
for all $z$ with $|z| \leq n^{1/2 +\eps_1}$ and all $0 \leq m \leq 10$.
\end{lemma}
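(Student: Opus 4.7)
The plan is to apply Proposition \ref{better-cor} at each $z$ in the disk $|z|\le n^{1/2+\eps_1}$ and for each $j$, using a dyadic partition of the spectrum of $A(z)$ around $\lambda_{i_j}(A(z))$. Specifically, for each integer $\alpha$, let $P_\alpha$ be the spectral projection of $A(z)$ onto eigenvectors whose eigenvalues lie in the shell $\{\lambda : 2^\alpha \le |\lambda - \lambda_{i_j}(A(z))| < 2^{\alpha+1}\}$, and set $r_\alpha := 2^\alpha$, $d_\alpha := \rank P_\alpha$. The eigenvalue separation \eqref{noon} gives $d_\alpha \ll r_\alpha n^{\eps_1}$ when $r_\alpha \ge 1$ and $d_\alpha \ll n^{\eps_1}$ when $r_\alpha < 1$; the delocalization bound \eqref{uimenj} implies $\|P_\alpha e_{p_j}\|, \|P_\alpha e_{q_j}\| \le d_\alpha^{1/2} n^{-1/2+\eps_1}$, so we may take $w := n^{-1/2+\eps_1}$ in the hypotheses of the proposition.

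If $n^{-\beta}$ is a lower bound for the minimum gap near $\lambda_{i_j}(A(z))$, so the innermost shell has $r_\alpha \ge n^{-\beta}$, then summing the dyadic contributions yields $\sum_\alpha d_\alpha/r_\alpha \ll n^{\eps_1+\beta}\log n$. Plugging this and $w = n^{-1/2+\eps_1}$ into Proposition \ref{better-cor} gives
\begin{equation*}
|\nabla^m \lambda_{i_j}(A(z))| \ll (n^{\eps_1+\beta}\log n)^{m-1} (n^{-1/2+\eps_1})^{2m} \ll n^{-m+O(m(\eps_1+\beta))},
\end{equation*}
and the analogous bound for $|\nabla^m Q_{i_j}|$ from \eqref{furioso}. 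For $n|\nabla^m P_{i_j,p_j,q_j}|$ I would expand $e_{p_j}, e_{q_j}$ over the partition and combine the sandwich estimates \eqref{extra-1}--\eqref{extra-3} with the incompressibility bounds $\|P_\alpha e_{p_j}\|, \|P_\alpha e_{q_j}\| \le d_\alpha^{1/2} w$; a factor of $w^2 = n^{-1+O(\eps_1)}$ then cancels the prefactor $n$ and leaves the bound in the right form.

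The remaining task is to verify that $\beta = O(\eps_1)$ may be taken throughout the disk. I would do this by a continuity bootstrap. The hypothesis $F(z_0) \ne 0$ together with the support condition on $G$ forces $Q_{i_j}(A(z_0)) \le n^{c_1}$, and hence the minimum gap of $A(z_0)$ near $\lambda_{i_j}$ is at least $n^{-c_1/2}$. Define $U$ to be the set of $z$ in the disk at which $Q_{i_j}(A(z)) \le n^{2c_1}$ holds for every $j$. On $U$ the estimate of the previous paragraph applies with $\beta \le c_1/2$, which in particular yields $|\nabla Q_{i_j}(A(z))| \ll n^{-1+O(\eps_1+c_1)}$; integrating along a straight path of length $O(n^{1/2+\eps_1})$ shows that the change of $Q_{i_j}$ between $z_0$ and any $z \in U$ is $o(n^{c_1})$. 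Thus $U$ is both open and closed in the connected disk and so coincides with it, and $\beta$ may be taken equal to $c_1$ uniformly; choosing $c_1$ small relative to $\eps_1$ absorbs the loss into the $n^{O(\eps_1)}$ factor demanded by the lemma.

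I expect the main obstacle to be this bootstrap, since the bound for $\nabla^m Q_{i_j}$ that we ultimately want depends quantitatively on $Q_{i_j}$ itself via the innermost $r_\alpha$; thus \eqref{lam3} has to be established self-consistently, and one must arrange the parameter hierarchy $\eps_1 \gg c_1$ and $C_1$ sufficiently large so that the continuity argument really closes within the disk and the restriction of $z$ to multiples of $n^{-C_1}$ in the good-configuration definition is harmless.
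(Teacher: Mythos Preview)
Your approach is essentially the same as the paper's: dyadic application of Proposition~\ref{better-cor}, a continuity bootstrap on $Q_{i_j}$ seeded by the support condition $Q_{i_j}(A(z_0))\le n^{c_1}$, and the expansion of $e_{p_j}^*(\nabla^m P_{i_j})e_{q_j}$ over the partition using Cauchy--Schwarz together with delocalization. The paper simply cites \cite[Lemma~59]{TVuniv} for \eqref{lam1}, \eqref{lam3} and the bootstrap, and writes out only the new bound \eqref{lam2}; you have reproduced the bootstrap explicitly.

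One cosmetic difference: the paper partitions by \emph{index} ($P_{i_j}^{(\alpha)}$ projects onto eigenvalues with $2^\alpha\le |i-i_j|<2^{\alpha+1}$, so $d_\alpha\asymp 2^\alpha$ automatically and $r_\alpha$ is the quantity to control), whereas you partition by \emph{eigenvalue distance} (so $r_\alpha=2^\alpha$ automatically and $d_\alpha$ is controlled via \eqref{noon}). These are interchangeable through \eqref{noon} and lead to the same estimate $\sum_\alpha d_\alpha/r_\alpha\ll n^{O(\eps_1+\beta)}$. A small bookkeeping point: on your set $U$ you have $Q\le n^{2c_1}$, hence minimum gap $\ge n^{-c_1}$, so $\beta=c_1$ there (not $c_1/2$); and in the paper's parameter hierarchy it is $\eps_1$ that is eventually chosen small relative to the given $c_1$ rather than the reverse, though either arrangement closes the argument since both exponents are ultimately taken arbitrarily small.
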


The bounds \eqref{lam1}, \eqref{lam3} were already proven in \cite[Lemma 59]{TVuniv}; the arguments in the proof of that lemma also show that
$$ Q_{i_j}(A(z)) \ll n^{\eps_1}$$
uniformly for all $z$ with $|z| \leq n^{1/2+\eps_1}$.

Now we prove \eqref{lam2}.  Arguing inductively as in the proof of \cite[Lemma 59]{TVuniv}, it suffices to establish the claim for $z$ in the ball $B(z_0,n^{-1-2\eps_1})$.  Let us first establish this for $z$ whose real and imaginary parts are a multiple of $n^{-C_1}$.  At this value of $z$, we can apply Proposition \ref{better-cor} exactly as in \cite[Section 4.3]{TVuniv} to obtain the bounds
\begin{align}
\| P_{i_j}(\nabla^m P_{i_j}) P_{i_j} \|_F &\ll n^{-m+O(\eps_1)} (\sum_{0 \leq \alpha \leq \log n} \frac{2^\alpha}{r_\alpha})^m \label{extra-1a}\\
\| P_{i_j}^{(\alpha)}(\nabla^m P_{i_j}) P_{i_j} \|_F =
\| P_{i_j}(\nabla^m P_{i_j}) P_{i_j,\alpha} \|_F
 &\ll \frac{2^{\alpha/2}}{r_\alpha} n^{-m+O(\eps_1)} (\sum_{0 \leq \alpha \leq \log n} \frac{2^\alpha}{r_\alpha})^{m-1}
 \label{extra-2a} \\
\| P_{i_j}^{(\alpha)}(\nabla^m P_{i_j}) P_{i_j}^{(\beta)} \|_F
 &\ll \frac{2^{\alpha/2} 2^{\beta/2}}{r_\alpha r_\beta} n^{-m+O(\eps_1)} (\sum_{0 \leq \alpha \leq \log n} \frac{2^\alpha}{r_\alpha})^{m-2}
\label{extra-3a}
\end{align}
for all $0 \leq m \leq 10$, $1 \leq j \leq k$, and $0 \leq \alpha,\beta \leq \log n$, where $r_\alpha$ is the minimal value of $|\lambda_i - \lambda_{i_j}|$ for $|i-i_j| \geq 2^\alpha$, and $P_{i_j}^{(\alpha)}$ is the spectral projection to those eigenvalues with $2^\alpha \leq |i-i_j| < 2^{\alpha+1}$.

In \cite[Section 4.3]{TVuniv} it is shown that
$$ \sum_{0 \leq \alpha \leq \log n} \frac{2^\alpha}{r_\alpha} \ll n^{O(\eps_1)};$$
in particular
$$ r_\alpha \gg n^{-O(\eps_1)} 2^\alpha.$$
We conclude that
\begin{equation}\label{sal}
\| P_{i_j}^{(\alpha)} (\nabla^m P_{i_j}) P_{i_j}^{(\beta)} \|_F
 \ll 2^{-\alpha/2} 2^{-\beta/2} n^{-m+O(\eps_1)}
\end{equation}
for all $-1 \leq \alpha,\beta \leq \log n$, where we adopt the convention that $P_{i_j}^{(-1)} := P_{i_j}$.
Now, we expand
\begin{align*}
(\nabla^m P_{i_j})_{p_j,q_j} &= e_{p_j}^* (\nabla^m P_{i_j}) e_{q_j} \\
&= \sum_{-1 \leq \alpha,\beta \leq \log n} (e_{p_j}^* P_{i_j}^{(\alpha)}) (P_{i_j}^{(\alpha)} (\nabla^m P_{i_j}) P_{i_j}^{(\beta)} ) (P_{i_j}^{(\beta)} e_{q_j}).
\end{align*}
Applying the triangle and Cauchy-Schwarz inequalities we conclude that
$$
|(\nabla^m P_{i_j})_{p_j,q_j}| \leq
\sum_{-1 \leq \alpha,\beta \leq \log n} \| e_{p_j}^* P_{i_j}^{(\alpha)}\| \|P_{i_j}^{(\alpha)} (\nabla^m P_{i_j}) P_{i_j}^{(\beta)} \|_F \|P_{i_j}^{(\beta)} e_{q_j}\|.$$
From the hypothesis \eqref{uimenj} and Pythagoras' theorem we have
$$ \| e_{p_j}^* P_{i_j}^{(\alpha)}\| \ll n^{\eps_1} 2^{\alpha/2} n^{-1/2}$$
and similarly
$$ \| P_{i_j}^{(\beta)} e_{q_j} \| \ll n^{\eps_1} 2^{\beta/2} n^{-1/2}$$
so from \eqref{sal} we obtain \eqref{lam2} as required, at least when the real and imaginary parts of $z$ are multiples of $n^{-C_1}$.  The general case can then be handled (for $C_1$ large enough) by an appeal to Lemma \ref{crude} by arguing exactly as in \cite[Section 4.3]{TVuniv}.

\section{Proof of Theorem \ref{theorem:CLT2}}\label{clt2-sec}

We now prove Theorem \ref{theorem:CLT2}.  The main tool is the following general central limit theorem:

\begin{proposition}[Central limit theorem]\label{propo1}  Suppose one has a random unit vector $u = (u_1,\ldots,u_n) \in \R^n$ such that
\begin{enumerate}
\item[(i)] (Exchangeability) The distribution of $u$ is symmetric with respect to the permutation group $S_n$.
\item[(ii)] For any distinct $i_1,\ldots,i_k$ with $k$ fixed, $\sqrt{n} u_{i_1},\ldots, \sqrt{n} u_{i_k}$ converges jointly in distribution to $k$ iid copies of $N(0,1)$ as $n \to \infty$.
\item[(iii)] (Symmetry) The distribution of $u$ is symmetric with respect to the reflection group $\{-1,+1\}^n$.
\end{enumerate}
Then for any unit vector $a  \in \R^n$ (depending on $n$, of course), $\sqrt{n} a \cdot u$ converges in distribution to $N(0,1)_\R$.
\end{proposition}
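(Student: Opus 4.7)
The plan is to show the characteristic function $\phi_n(t) := \E e^{it\sqrt{n}\,a\cdot u}$ tends to $e^{-t^2/2}$ for each fixed $t\in\R$. Sign symmetry (iii) lets me introduce an iid Rademacher sequence $\epsilon=(\epsilon_i)$ independent of $u$ and, by averaging over $\epsilon$, conclude the key identity
$$\phi_n(t) = \E\prod_{i=1}^n \cos\bigl(t\sqrt{n}\,a_i u_i\bigr).$$
I would then fix a small $\eta > 0$, set $S := \{i : a_i^2 > \eta\}$ (so that $|S| \le 1/\eta$), and split $a = a_B + a_S$ with $a_B := a\mathbf{1}_S$. For the ``big'' factor, assumption (ii) applied with $k = |S|$ gives that $(\sqrt{n}\,u_i)_{i\in S}$ converges jointly in distribution to iid standard normals $(g_i)_{i\in S}$, so by bounded convergence $\E\prod_{i\in S}\cos(t\sqrt{n}\,a_i u_i)\to e^{-t^2|a_B|^2/2}$.

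The heart of the argument is the ``small'' factor. For $i\notin S$ we have $|a_i|\le\sqrt{\eta}$; the Taylor expansion $\log\cos(x) = -x^2/2 + O(x^4)$, together with a truncation on $|u_i|$, yields $\prod_{i\notin S}\cos(t\sqrt{n}\,a_i u_i)\approx\exp(-t^2 V_S/2)$, where $V_S := n\sum_{i\notin S} a_i^2 u_i^2$. I therefore need $V_S\to|a_S|^2$ in probability. Exchangeability (i) together with $\sum_j u_j^2 = 1$ gives $\E u_i^2 = 1/n$, hence $\E V_S = |a_S|^2$; the delicate step is controlling the fluctuations. I would truncate $nu_i^2 = Y_i + Z_i$ at level $M$, with $Y_i := (nu_i^2)\wedge M$. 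The tail mean $\epsilon_M := \E[(nu_1^2)\mathbf{1}(nu_1^2>M)]$ is small uniformly in $n$ (for $n$ large and $M$ large) by uniform integrability of $\{nu_1^2\}$, which follows from (ii) at $k=1$ plus the identity $\E nu_1^2 = 1$ via Scheff\'e's lemma. The unit-norm constraint $\sum_i Y_i = n - \sum_i Z_i$ forces $\operatorname{Var}(\sum_i Y_i) = \operatorname{Var}(\sum_i Z_i)\le n^2\epsilon_M$, so exchangeability gives $\operatorname{Cov}(Y_1,Y_2) = O(\epsilon_M)$; combining this with $\operatorname{Var}(Y_1)\le M$ and $\sum_{i\notin S}a_i^4\le\eta|a_S|^2$ yields $\operatorname{Var}\bigl(\sum_{i\notin S}a_i^2 Y_i\bigr) = O(M\eta+\epsilon_M)$, which can be made arbitrarily small by choosing $M$ large and then $\eta$ small. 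A Markov bound $\E|\sum_{i\notin S}a_i^2(Z_i - \E Z_1)| = O(\epsilon_M)$ handles the tail part, and together these give $V_S\to|a_S|^2$ in probability.

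For the joint conclusion I would condition on $u_B := (u_i)_{i\in S}$: properties (i) and (iii) both transfer to the conditional distribution of $u_{S^c}$ given $u_B$, and the joint form of (ii) gives the required asymptotic independence of small from big coordinates in the limit; so the concentration of $V_S$ and the Taylor approximation both apply conditionally, giving $\E[\prod_{i\notin S}\cos(t\sqrt{n}\,a_i u_i)\mid u_B]\to e^{-t^2|a_S|^2/2}$ in probability. Taking total expectation and multiplying by the big-part limit yields $\phi_n(t)\to e^{-t^2(|a_B|^2+|a_S|^2)/2} = e^{-t^2/2}$, since $|a_B|^2+|a_S|^2 = 1$. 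The main obstacle is the concentration step for $V_S$: hypothesis (ii) only supplies convergence of $nu_i^2$ in distribution (so $\operatorname{Var}(nu_1^2)$ need not even be bounded, as simple mixtures of a uniform sphere with a vanishing atomic perturbation show), and a naive $L^2$ argument on the raw sum therefore fails; the delicate technical point is to convert the uniform integrability of $\{nu_1^2\}$ into a quantitative bound on $\operatorname{Cov}(Y_1,Y_2)$ via the unit-norm identity $\sum_i Y_i = n - \sum_i Z_i$.
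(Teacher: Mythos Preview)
Your approach is correct and genuinely different from the paper's. The paper proves the proposition by the \emph{moment method}: it truncates each coordinate at a slowly growing level, writing $u_i = u_{i,\leq} + u_{i,>}$ with $u_{i,\leq} = u_i 1_{|u_i|\le A_n/\sqrt{n}}$, shows $\sqrt{n}\,a\cdot u_>\to 0$ in $L^2$ (sign symmetry (iii) kills the cross terms, and uniform integrability of $nu_1^2$---the same fact you use---handles the diagonal), and then shows that $n^{k/2}\E(a\cdot u_\leq)^k\to\E G^k$ for every fixed $k$ by expanding, using (iii) to restrict to tuples where every index appears with even multiplicity, and bounding the resulting combinatorial sum by $\bigl(\sum_j a_j^2\bigr)^l = 1$. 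There is no big/small splitting of $a$ and no characteristic function.

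Your route---rewriting $\phi_n(t)$ as $\E\prod_i\cos(t\sqrt{n}a_iu_i)$ via (iii), handling the finitely many large $a_i$ by a direct appeal to (ii), and reducing the small-$a_i$ factor to concentration of $V_S = n\sum_{i\notin S}a_i^2u_i^2$---is a legitimate alternative. Your treatment of the delicate point (bounding $\operatorname{Cov}(Y_1,Y_2)$ via the deterministic identity $\sum_i Y_i = n-\sum_i Z_i$ and the crude bound $\E(\sum Z_i)^2\le n\,\E\sum Z_i = n^2\epsilon_M$) is correct and nicely exploits the unit-norm constraint; this is where your argument does real work that the paper's combinatorial moment bound sidesteps. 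Each approach has its advantages: the paper's is shorter and avoids any case distinction on $a$, while yours makes the separate roles of (i), (ii), (iii) more transparent and would adapt more readily to, say, a quantitative version.

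One minor point: your final conditioning step is unnecessary and the sketch of it is the weakest part of the write-up (conditional uniform integrability of $nu_1^2$ given $u_B$ is not immediate from the hypotheses). You don't need it. Your own argument already shows that $\prod_{i\notin S}\cos(t\sqrt{n}a_iu_i)$ converges \emph{in probability} to the constant $e^{-t^2|a_S|^2/2}$ (not merely in expectation): on the high-probability event $\{\sum_{i\notin S}a_i^2(nu_i^2)1_{nu_i^2>M}\le\delta\}$ every $|t\sqrt{n}a_iu_i|$ is at most $|t|\max(\sqrt{\eta M},\sqrt{\delta})$, so the Taylor bound and the concentration of $V_S$ apply pathwise. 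Since the big-coordinate product is bounded by $1$, a Slutsky/bounded-convergence argument gives $\phi_n(t)\to e^{-t^2|a_S|^2/2}\cdot\lim_n\E\prod_{i\in S}\cos(t\sqrt{n}a_iu_i) = e^{-t^2/2}$ directly, with no conditioning.
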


Indeed, part (b) of Theorem \ref{theorem:CLT2} follows immediately from this proposition and Corollary \ref{goegue}, noting from the symmetries of $M_n$ that the eigenvector $u_i(M_n)$ is both symmetric and exchangeable.  Part (a) also follows after first reducing to the case $a \cdot e_1 = 0$ (noting from Corollary \ref{goegue} that $u_i(M_n) \cdot e_1$ converges in distribution to $|N(0,1)_\R|$, so that $(u_i(M_n) \cdot e_1) (a \cdot e_1)$ converges in distribution to $0$ if $a \cdot e_1 = o(1)$).

We now prove Proposition \ref{propo1}.  Let $A = A_n$ be a quantity growing slowly to infinity that we will choose later.  We truncate
$$ u_i = u_{i,\leq} + u_{i,>}$$
where $u_{i,\leq} := u_i 1_{|u_i| \leq A/\sqrt{n}}$ and $u_{i,>} := u_i 1_{|u_i| > A/\sqrt{n}}$.  We then split $u = u_{\leq} + u_>$ correspondingly.  It will suffice to show that, for a suitable choice of $A$,
\begin{enumerate}
\item $a \cdot u_{\leq}$ converges in distribution to $N(0,1)_\R$, and
\item $a \cdot u_>$ converges in probability to zero.
\end{enumerate}

We first consider the second claim.    Here we use the second moment method.  It suffices to show that
$$ n \E |a \cdot u_>|^2 = o(1).$$
Set $a=(a_1, \dot, a_n)$. The left-hand side can be expanded as
$$ n \sum_{i=1}^n \sum_{j=1}^n a_i a_j \E u_{i,>} u_{j,>}.$$
Using the symmetry hypothesis (iv), we see that if $i \neq j$, then $u_{i,>} u_{j,>}$ has a symmetric distribution and thus has mean zero.  Thus only the diagonal terms contribute.  Using hypothesis (i) and the unit normalization of $a$, the second moment becomes
$$ n \E u_{1,>}^2.$$
On the other hand, from (i) we have
$$ n \E u_1^2 = 1$$
while from (ii) we have
$$ n \E u_1^2 1_{|u_1| \leq K} = \E G^2 1_{|G| \leq K} + o(1)$$
for any fixed $K$, where $G \equiv N(0,1)$, and thus (since $A = A_n$ goes to infinity)
$$ \liminf_{n \to \infty} n \E u_1^2 1_{|u_1| \leq A} \geq \sup_K \E G^2 1_{|G| \leq K} = 1.$$
We conclude that
$$ n \E u_{1,\leq}^2 = 1 - o(1)$$
and
$$ n \E u_{1,>}^2 = o(1)$$
and Claim 2 follows.

Now we turn to Claim 1.  Here we use the moment method. By Carleman's theorem (see e.g. \cite{BS}), it suffices to show that for each fixed positive integer $k$,
$$ n^{k/2} \E (a \cdot u_{\leq})^k = \E G^k + o(1).$$
The left-hand side expands as
$$ n^{k/2} \sum_{1 \leq i_1,\ldots,i_k \leq n} a_{i_1} \ldots a_{i_k} \E u_{i_1,\leq} \ldots u_{i_k,\leq}.$$
By the symmetry hypothesis (iii), the expectation vanishes unless each index $i$ appears an even number of times.  Using hypothesis (ii) (and (iii)), we see that
$$
n^{k/2} \E u_{i_1,\leq} \ldots u_{i_k,\leq} = \E G_{i_1} \ldots G_{i_k} + o(1)$$
where $G_1,\ldots,G_n$ are iid copies of $N(0,1)$, uniformly in $i_1,\ldots,i_k$, if $A_n$ grows sufficiently slowly to infinity.  Observe that
$$ \sum_{1 \leq i_1,\ldots,i_k \leq n} a_{i_1} \ldots a_{i_k} \E G_{i_1} \ldots G_{i_k} = \E \left(\sum_{i=1}^n a_i G_i\right)^k = \E G^k$$
since $\sum_{i=1}^n a_i G_i \equiv G$, and that (as before) the summands vanish unless each index $i$ appears an even number of times.  Thus it suffices to show that
$$ \sum_* a_{i_1} \ldots a_{i_k} = O(1)$$
where the sum $*$ is over all $k$-tuples $1 \leq i_1,\ldots,i_k \leq n$ in which each index $i$ appears an even number of times, and the implied constants in the $O()$ notation are allowed to depend on $k$.

Suppose there are $l$ distinct indices appearing, then the contribution of this case is at most
$$ O\left( \sum_{j_1=1}^n \ldots \sum_{j_l=1}^n a_{j_1}^2 \ldots a_{j_l}^2 \right)$$
(since we have $a_{j_r}^m \leq a_{j_r}^2$ whenever $m$ is a positive even number).  But as $a$ is a unit vector, this sums to $O(1)$ as required.  This concludes the proof of Proposition \ref{propo1} and hence Theorem \ref{theorem:CLT2}.

\begin{remark}
An extension of the above argument shows that, under the hypotheses of Proposition \ref{propo1}, if $a^1,\ldots,a^l$ are a bounded number of orthonormal vectors in $\R^n$, then $\sqrt{n} a^1 \cdot u, \ldots, \sqrt{n} a^l \cdot u$ converges jointly in distribution to $l$ iid copies of $N(0,1)_\R$.
\end{remark}

\section{Proof of Theorem \ref{green}}\label{green-sec}

We now prove Theorem \ref{green}.  Let $M_n, M'_n, C, z, E, \eta,p,q$ be as in that theorem. Let $c_1>0$ be a small constant to be chosen later, and let $c_2 > 0$ be an even smaller constant (depending on $c_1$) to be chosen later.  Write $A_n := \sqrt{n} M_n$ and $A'_n :=\sqrt{n} M'_n$.   

Let us call an expression depending on $M_n$ (or $A_n$) \emph{stable} if it only changes by $O(n^{-c})$ for some $c>0$ if $M_n$ is replaced by $M'_n$.  Our task is thus to show that $\E G\left( \left(\frac{1}{\sqrt{n}} M_n-zI\right)^{-1}_{pq} \right)$ is stable.

We first subtract off the ``global'' portion of the resolvent $\left(\frac{1}{\sqrt{n}} M_n-zI\right)^{-1}_{pq}$.  Set $z_0 := E + i n^{-1+c_2/2}$.  Applying the local semicircle law from \cite[Theorem 2.1]{EYY1}, one has\footnote{Strictly speaking, the statement of \cite[Theorem 2.1]{EYY1} only controls the diagonal component $p=q$ of the resolvent.  However, an inspection of the proof of that theorem (see in particular \cite[(3.13)]{EYY1} and \cite[Proposition 3.3]{EYY1}) reveals that the off-diagonal components $p \neq q$ were also controlled by the argument.}
$$ \left(\frac{1}{\sqrt{n}} M_n-z_0I\right)^{-1}_{pq} = m_{sc}(z_0) \delta_{pq} + O(n^{-c})$$
with overwhelming probability for some $c>0$, where $\delta_{pq}$ is the Kronecker delta function and $m_{sc}(z_0)$ is semicircular Stieltjes transform
$$ m_{sc}(z_0) := \frac{-z_0+\sqrt{z_0^2-4}}{2}.$$
This type of result already gives the claim when $\eta \geq n^{-1+c_2/2}$, so we may assume that $\eta < n^{-1+c_2/2}$.
After shifting $G$ by $m_{sc}(z_0) \delta_{pq}$, and using the regularity bounds on $G$, it thus suffices to show that the expression
$$ \E G\left( \left(\frac{1}{\sqrt{n}} M_n-zI\right)^{-1}_{pq} - \left(\frac{1}{\sqrt{n}} M_n-z_0 I\right)^{-1}_{pq} \right)$$
is stable.

From \eqref{green-eq} we have
$$
\left(\frac{1}{\sqrt{n}} M_n-zI\right)^{-1}_{pq} - \left(\frac{1}{\sqrt{n}} M_n-z_0 I\right)^{-1}_{pq}
 = \sum_{i=1}^n F(\lambda_i(A_n)) n P_{i,p,q}(A_n)$$
 where
$$ F(x) := \frac{1}{x - nz} - \frac{1}{x - nz_0}.$$
Note that $F$ decays quadratically in $x$ rather than linearly, due to the subtraction of the comparison term $\frac{1}{x-nz_0}$; this will allow us to easily neglect the contribution of the spectrum that is far from $E$ in the rest of the argument.

We now invoke the eigenvalue rigidity result from \cite[Theorem 2.2]{EYY-rigid}.  Among other things, this theorem gives an interval $[i_-,i_+] \subset [1,n]$ of length $i_+ - i_- = O(n^{c_2})$ (depending only on $n$, $E$, and $c_2$) which contains most of the eigenvalues close to $E$, in the sense that we have
$$ \inf_{i \in [1,n] \backslash [i_-,i_+]} |\lambda_i(A_n) - E| \geq n^{c_2-o(1)}$$
with overwhelming probability.  In fact, we have the stronger assertion that
$$ |\lambda_i(A_n) - E| > n^{-o(1)} (n^{c_2} + \operatorname{dist}(i, [i_-,i_+]))$$
for all $i \in [1,n] \backslash [i_-,i_+]$ with overwhelming probability.  In particular, this implies that
$$
 |F( \lambda_i(A_n) )| \leq \frac{n^{c_2/2 + o(1)}}{(n^{c_2} + \operatorname{dist}(i, [i_-,i_+]))^2}.
$$
and thus
\begin{equation}\label{soo} 
\sum_{i \in [1,n] \backslash [i_-,i_+]} |F( \lambda_i(A_n) )| \leq n^{-c_2/2 + o(1)}
\end{equation}
with overwhelming probability.

Also, by the delocalization of eigenvalues (established in the bulk in \cite[Theorem 4.8]{ESY3} (see also the earlier result \cite[Theorem 1.2]{ESY} handling the smooth case), and up to the edge in \cite[Proposition 1.12]{TVedge}), we have
$$ |u_{i,p}(A_n)| = O(n^{-1/2+o(1)})$$
with overwhelming probability for all $1 \leq i,p \leq n$, and hence
$$ n P_{i,p,q}(A_n) = O(n^{o(1)})$$
with overwhelming probability for all $1 \leq i,p,q, \leq n$.  As such, we see that with overwhelming probability, we have
\begin{align*}
\left(\frac{1}{\sqrt{n}} M_n-zI\right)^{-1}_{pq} & - \left(\frac{1}{\sqrt{n}} M_n-z_0 I\right)^{-1}_{pq} \\
&= \sum_{i \in [i_-,i_+]} F(\lambda_i(A_n)) n P_{i,p,q}(A_n) + O( n^{-c_2/2+o(1)} )
\end{align*}
with overwhelming probability.  Using the regularity bounds on $G$, it thus suffices to show that the quantity
$$ \E G( \sum_{i \in [i_-,i_+]} F(\lambda_i(A_n)) n P_{i,p,q}(A_n) )$$
is stable.

We let $\chi: \C \to [0,1]$ be a cutoff function supported on $B(0,1)$ that equals $1$ on $B(0,1/2)$.  Define the truncated version
$$ \tilde F(x) := F(x) (1 - \chi(n^{c_1} (x-z)))$$
of $F$.  From the hypothesis \eqref{mo}, we see that for each $i \in [i_-,i_+]$, one has
$$ F(\lambda_i(A_n)) = \tilde F(\lambda_i(A_n))$$
with probability at least $1-O(n^{-10c_2})$ (say), if $c_2$ is sufficiently small depending on $c_1$ and on the implied constant in the high probability event \eqref{mo}.  In particular, by the union bound, we have
$$ G( \sum_{i \in [i_-,i_+]} F(\lambda_i(A_n)) n P_{i,p,q}(A_n) ) = G( \sum_{i \in [i_-,i_+]} \tilde F(\lambda_i(A_n)) n P_{i,p,q}(A_n) )$$
with high probability. 

 Similarly for $A'_n$ using \eqref{no}.  It thus suffices to show that the quantity
$$ \E G( \sum_{i \in [i_-,i_+]} \tilde F(\lambda_i(A_n)) n P_{i,p,q}(A_n) )$$
is stable.  But this follows directly from Theorem \ref{theorem:main0} (if $c_1, c_2$ are small enough).  The proof of Theorem \ref{green} is now complete.

\begin{remark} 
The exponential decay hypothesis (Condition {\condo}) in Theorem \ref{green} is needed only to be able to use the eigenvalue rigidity result from \cite{EYY-rigid} and the local semicircle law from \cite{EYY1}.  It is quite likely that in both cases, one can relax Condition {\condo} to Condition {\condone} (conceding some factors of $n^{O(1/C_0)}$ in the process), which would then allow one to achieve a similar relaxation in Theorem \ref{green}.
\end{remark}

\end{document}